\documentclass[11pt]{amsart}
\usepackage{graphicx}
\usepackage{color, fullpage, amsmath, amsthm, amssymb,verbatim,enumerate}


\newtheorem{theorem}{Theorem}
\newtheorem{lemma}[theorem]{Lemma}
\newtheorem{proposition}[theorem]{Proposition}
\newtheorem{corollary}[theorem]{Corollary}

\theoremstyle{definition}
\newtheorem*{rem}{Remark}

\theoremstyle{definition}

\theoremstyle{definition}

\newcommand{\Z}{\mathbb{Z}}
\newcommand{\R}{\mathbb{R}}
\newcommand{\N}{\mathbb{N}}

\newcommand\interior{\operatorname{int}}

\begin{document}

\date{\today}
\title{Interpolating quasiregular power mappings}
\author{Jack Burkart}
\email{jburkart@simons-rock.edu}
\address{Bard College at Simon's Rock, 84 Alford Road, Great Barrington, MA 01230}

\author{Alastair N. Fletcher}
\email{afletcher@niu.edu}
\address{Department of Mathematical Sciences, Northern Illinois University, DeKalb, IL 60115-2888, USA}

\author{Daniel A. Nicks}
\email{dan.nicks@nottingham.ac.uk}
\address{School of Mathematical Sciences, The University of Nottingham, University Park, Nottingham, NG7 2RD, UK}

\begin{abstract}
We construct a quasiregular mapping in $\R^3$ that is the first to illustrate several important dynamical properties: the quasi-Fatou set contains wandering components; these quasi-Fatou components are bounded and hollow; and  the Julia set has components that are genuine round spheres. The key tool in this construction is a new quasiregular interpolation in round rings in $\R^3$ between power mappings of differing degrees on the boundary components. We also exhibit the flexibility of constructions based on these interpolations by showing that we may obtain quasiregular mappings which grow as quickly, or as slowly, as desired.
\end{abstract}

\maketitle

\section{Introduction}

\subsection{Background}
Let $f: \mathbb{C} \rightarrow \mathbb{C}$ be a non-constant entire function, and denote by $f^n$ the function~$f$ composed with itself $n$ times. The Julia set of $f$, denoted $J(f)$, is the set of all points~$z$ where the orbit, $\{f^n(z)\}$, exhibits sensitivity to initial conditions. Its complement is called the Fatou set, denoted $F(f)$. The Fatou set is open, and its connected components are called Fatou components.

The nature of the singularity at infinity for $f$ gives a key dichotomy in the study of entire functions. If $\infty$ is a pole, then $f$ is a polynomial. Otherwise, $f$ has an essential singularity there, and $f$ is called a transcendental entire function.	
Transcendental entire functions exhibit many dynamical phenomena that do not exist in the polynomial setting. For example, it is possible for transcendental entire functions to have \textit{multiply connected wandering domains}, and entire functions with this property are one of the main motivations for this article. 

 In \cite{Baker63}, Baker exhibited the first example of a transcendental entire function with a multiply connected Fatou component, given by an infinite product of the form
\begin{equation*}
f(z) = Cz^2\prod_{n=1}^{\infty}\left(1+\frac{z}{R_n}\right).
\end{equation*}
Here, $C$ is a large positive constant, and $\{R_n\}$ is a rapidly increasing sequence. One can think of the $R_n$ as radii where the function $f$ increases its degree from $2+(n-1)$ to $2+n$.  Moreover, a key part of Baker's analysis uses the fact that on annuli of the form $A(R_n^2, R_{n+1}^{1/2})$, the function $f(z)$ is a small perturbation of a monomial of degree $2+n$. He later showed, in \cite{Baker76}, that $f$ has a \emph{bounded} multiply connected Fatou component $U$, and thus $f^n(U)$ is a sequence of wandering domains. This was the first example of wandering domains for entire functions. In fact, Baker showed \cite{Baker75} that \emph{all} multiply connected Fatou components of transcendental entire functions are bounded, and so are wandering domains on which the iterates tend to infinity.

Bergweiler, Rippon, and Stallard gave a comprehensive characterization of the internal dynamics in multiply connected wandering domains in \cite{BRS13}. Among other things, they demonstrated that Baker's example has several features that generalize. They showed that every sequence of multiply connected wandering domains contains large round annuli where the function is very close to a monomial \cite[Theorem 5.2]{BRS13}. These annuli eventually ``absorb" the orbits of all points in the multiply connected wandering domains. Therefore, the orbit of a point in a multiply connected Fatou component eventually lands in this absorbing annulus, and thereafter, the dynamics behave roughly like that of iterating monomials of increasing degree.
	
Bergweiler, Rippon, and Stallard were additionally able to modify Baker's infinite product construction to produce a variety of functions having multiply connected wandering domains with interesting topological features, see \cite[Section 10]{BRS13}. Bishop used an infinite product construction in \cite{Bis18} to construct an entire function whose Julia set has Hausdorff dimension 1, and his results were generalized by the first named author in \cite{Bur21} to construct Julia sets with fractional packing dimension. 

Infinite products are not the only tools that allow one to construct a function modeled by monomials with increasing degree on large annuli. Kisaka and Shishikura \cite{KS08} used a technique known as quasiconformal surgery to construct an entire function with a multiply connected wandering domain that is a topological annulus. All previously constructed examples where the connectivity was known had infinitely many complementary components. The first author and Lazebnik gave another approach to constructing functions with multiply connected wandering domains in \cite{BL23} by modifying a construction of Bishop known as quasiconformal folding \cite{Bis15}. The construction in \cite{BL23}  motivates many of the ideas in this article.

Inspired by the above results,  our attention here turns to analogous ideas in higher dimensions.
Quasiregular dynamics is the natural generalization of complex dynamics to higher real dimensions, and thus is the appropriate setting for our study. There has been a substantial amount of work studying the Julia set $J(f)$ and its complement the quasi-Fatou set $QF(f)$ for transcendental type quasiregular maps $f:\R^n \to \R^n$, for $n\geq 2$. 
This was initiated by Bergweiler and the third named author in \cite{BN14}. Of particular relevance here is the paper \cite{NS17} of the third named author and Sixsmith, where it was shown that in every dimension at least $2$, there exists a quasiregular map $f$ for which $QF(f)$ has a component which is hollow \cite[Theorem 1.1]{NS17}.
Here, a domain $U \subset \R^n$ is called hollow if there exists a bounded component of $\R^n \setminus U$, and is called full if there is no bounded complementary component. In dimension $2$, a domain $U$ is hollow if and only if $U$ is multiply connected, but in higher dimensions the richer topology means this is no longer true --- consider, for example, a solid torus.

A significant open question in quasiregular dynamics is whether every hollow quasi-Fatou component must be bounded. If so, this would be analogous to Baker's result in the complex setting that every multiply connected Fatou component is bounded, which he proved over ten years after giving the first example of such Fatou components. An affirmative answer to the above question would imply that, for any transcendental type quasiregular map $f$, the Julia set $J(f)$ is perfect and coincides with the boundary of the fast escaping set, and that $J(f) = J(f^p)$ for $p\in\N$, see \cite{NS17}. These properties
are familiar in complex dynamics but have yet to be established in general in the quasiregular setting. 

Any quasi-Fatou component that is hollow and bounded must be a wandering domain. Moreover, the results of \cite{NS17, NS17b} establish a strong analogy between the structure and behaviour of such
components and that of multiply connected wandering domains in complex dynamics.

\subsection{Statement of results}

It was left open in \cite{NS17} whether the hollow components of $QF(f)$ found there were bounded or unbounded. In this paper, we address this question via the following result.

\begin{theorem}
\label{thm:hollow}
There exists a quasiregular map $f:\R^3 \to \R^3$ for which $QF(f)$ contains bounded, hollow components. Moreover, there exist a sequence of quasi-Fatou components $(\Omega_k)_{k=1}^{\infty}$ and a sequence of spheres $(\Gamma_k)_{k=1}^{\infty}$ centered at the origin and with radii tending to infinity such that, for $k\in \N$:
\begin{enumerate}[(a)] 
\item the $\Gamma_k$ are components of $J(f)$,
\item $f$ maps $\Gamma_k$ onto $\Gamma_{k+1}$, 
\item $\Omega_k$ is a bounded, hollow component of $QF(f)$,
\item $\Gamma_k$ and $\Gamma_{k+1}$ are boundary components of $\Omega_k$.
\end{enumerate}
\end{theorem}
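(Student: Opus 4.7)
The plan is to apply the new round-ring quasiregular interpolation announced in the abstract. Pick integer degrees $d_k\geq 2$ and radii $R_k\to\infty$ with $R_{k+1}=R_k^{d_k}$, and set $\Gamma_k=\{x\in\R^3:|x|=R_k\}$. On the ball $\overline{B(0,R_1)}$ take $f$ to be a quasiregular power map of degree $d_1$ sending $\Gamma_1$ onto $\Gamma_2$. On each closed round ring $\{R_k\leq|x|\leq R_{k+1}\}$, invoke the interpolation theorem to obtain a quasiregular extension that equals the degree-$d_k$ power map on $\Gamma_k$ and the degree-$d_{k+1}$ power map on $\Gamma_{k+1}$, with dilatation bounded independently of $k$. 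Because adjacent pieces share their boundary values, they glue into a single quasiregular $f:\R^3\to\R^3$ with $f(\Gamma_k)=\Gamma_{k+1}$, giving (b); and, with the interpolation chosen suitably, the open ring $\Omega_k^\circ:=\{R_k<|x|<R_{k+1}\}$ is carried into the next open ring $\Omega_{k+1}^\circ$.

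\textbf{Rings lie in $QF(f)$.} For any $y\in\Omega_k^\circ$, iterates satisfy $f^n(y)\in\Omega_{k+n}^\circ$, so $|f^n(y)|\geq R_{k+n}\to\infty$. For any small neighbourhood $U\subset\Omega_k^\circ$ the forward orbit $\bigcup_n f^n(U)$ avoids the ball $B(0,R_k)$, a set of positive conformal capacity. The Bergweiler--Nicks capacity criterion for the Julia set then places $\Omega_k^\circ\subset QF(f)$; let $\Omega_k$ denote the quasi-Fatou component containing $\Omega_k^\circ$.

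\textbf{Julia spheres as components.} The delicate step is $\Gamma_k\subset J(f)$; that is, for every $z\in\Gamma_k$ and every neighbourhood $V$ of $z$, the complement $\R^3\setminus\bigcup_n f^n(V)$ has capacity zero. On each sphere, the power maps expand angles by the product $d_k d_{k+1}\cdots d_{k+n-1}$, so a small patch of $\Gamma_k$ eventually unfolds over a wide spherical shell around $\Gamma_{k+n}$; combined with a careful arrangement of the interpolation so that forward iterates do not systematically miss a capacity-positive bounded set, this yields the required covering. Once $\Gamma_k\subset J(f)$ is known, the connected component $\Omega_k$ cannot cross $\Gamma_k$ or $\Gamma_{k+1}$, forcing $\Omega_k=\Omega_k^\circ$ and $\partial\Omega_k=\Gamma_k\cup\Gamma_{k+1}$, which gives (d). Then $\Omega_k$ is bounded and hollow, as $\overline{B(0,R_k)}$ is a bounded complementary component, giving (c). Finally, each $\Gamma_k$ is a whole component of $J(f)$ because the quasi-Fatou rings $\Omega_{k-1}$ and $\Omega_k$ (with $\Omega_0$ taken to be $B(0,R_1)\subset QF(f)$) flank it on both sides and prevent any other Julia point from being path-connected to $\Gamma_k$, proving (a).

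\textbf{Main obstacle.} The hardest ingredient is the round-ring interpolation itself: a quasiregular map on $\{R_k\leq|x|\leq R_{k+1}\}$ with prescribed power-map boundary values of \emph{different} degrees, uniformly bounded dilatation, and the image control $f(\Omega_k^\circ)\subset\Omega_{k+1}^\circ$. This is the novel technical tool of the paper, and it has no direct planar analogue because distinct degrees on boundary circles obstruct conformal interpolation. A secondary but nontrivial point is arranging the interpolation so that forward iterates of any neighbourhood of $\Gamma_k$ cover the complement of a capacity-zero set; this is precisely what forces $\Gamma_k\subset J(f)$ rather than allowing adjacent quasi-Fatou rings to merge. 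Once these quantitative features of the interpolation are secured, the dynamical conclusions (a)--(d) follow by the separation, escape, and capacity arguments above.
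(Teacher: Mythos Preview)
Your proposal has a genuine topological gap at its core. You assert that the interpolation can be ``chosen suitably'' so that the open ring $\Omega_k^\circ=\{R_k<|x|<R_{k+1}\}$ maps into $\Omega_{k+1}^\circ$. This is impossible. If a quasiregular map sends the closed ring $A(R_k,R_{k+1})$ into $A(R_{k+1},R_{k+2})$ with inner sphere to inner sphere and outer sphere to outer sphere, then it is a proper map between compact oriented $3$-manifolds with boundary, and a standard degree argument via $H_3(A,\partial A)\to H_2(\partial A)$ forces the degrees on the two boundary spheres to agree. Since $p_{d_k}$ and $p_{d_{k+1}}$ have degrees $d_k^2\neq d_{k+1}^2$, no such interpolation exists. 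Indeed, the paper's interpolation $P:A(1,e^{1/d})\to\overline{B(0,e^3)}$ is explicitly onto a \emph{ball}, not a ring: the change of degree is achieved by folding ``flaps'' down to the inner sphere, which necessarily fills in the hole.

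The paper circumvents this obstruction by a more delicate architecture. The interpolation happens only in thin rings $A(R_n,S_n)$ with $S_n=e^{1/d_n}R_n$, and on the much larger rings $A(S_n,R_{n+1})$ the map is a pure scaled power map $c_{n+1}\cdot p_{d_{n+1}}$. The Julia spheres $\Gamma_k$ are \emph{not} the spheres $\{|x|=R_k\}$ where pieces are glued; they sit inside $V_k=A(\tfrac32 R_k,\tfrac52 R_k)$, well within the power-map zone, and are defined implicitly as $\Gamma_k=\bigcap_n\{x:f^j(x)\in V_{k+j}\text{ for }j\le n\}$. Similarly the invariant quasi-Fatou rings are $B_k=A(4R_k,\tfrac14 R_{k+1})$, again lying in the power-map region; it is these that satisfy $f(B_k)\subset B_{k+1}$. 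The expansion argument for $\Gamma_k\subset J(f)$ then uses the Zorich conjugacy on the power-map zones together with a covering lemma showing $f(A_k)\supset B(0,4R_k)$ (which exploits precisely the ball-image of the interpolation). Your outline conflates the interpolation spheres with the Julia spheres and the full inter-glue rings with the forward-invariant rings; once the topological obstruction above is recognised, these must be decoupled, and the bookkeeping becomes the substance of the proof.
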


We re-iterate that this is the first construction of a quasiregular map in higher dimensions for which $QF(f)$ is known to contain bounded, hollow components, and that these are necessarily wandering domains. In fact, this is the first example of a transcendental type quasiregular map of~$\R^n$, with $n\ge3$, for which it is known that the quasi-Fatou set has more than one component.  

The key step in our construction is an interpolation for quasiregular power maps of different degrees. The technique is inspired by the construction of Lazebnik and the first named author which gives an interpolation between $z^m$ and $z^n$ in planar annuli \cite{BL23}. It is well known that interpolation in ring domains in the bi-Lipschitz and quasiconformal categories is possible by Sullivan's Annulus Theorem, see \cite{Sullivan}, or \cite{TV} for a quantitative version. However, for non-injective maps, general interpolation results are rather rare: the Berstein-Edmonds extension theorem in dimension three~\cite{BE79} has been used several times in uniformly quasiregular dynamics, see \cite{FS22,FSV22,FW15}. See also~\cite{PW24} which includes, among many other things, work along a similar theme in dimension four.

Our main interpolation result concerns quasiregular maps $p_d:\R^3\to\R^3$ analogous to complex power maps $z^d$. We mention here only that $|p_d(x)|=|x|^d$ and that $p_d$ has degree $d^2$, deferring a fuller introduction of these maps to section~\ref{prelims}.

To set some notation, for $y\in \R^n$ and $0<r<s$, we write $B(y,r)$ for the open ball $\{x\in \R^n : |x-y| <r \}$, and $A(r,s)$ for the closed ring $\{ x \in \R^n : r \leq |x| \leq s \}$.

\begin{theorem}
\label{thm:interp}
Let $d\in\N$ be odd. There exists a quasiregular map $P: A(1,e^{1/d})\to \overline{B(0,e^3) }$ that equals $p_d$ when $|x|=1$ and equals $p_{3d}$ when $|x|=e^{1/d}$. The dilatation $K(P)$ is independent of $d$.
\end{theorem}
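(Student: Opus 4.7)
The plan is to reduce the problem to a $d$-independent interpolation via the composition law $p_{3d} = p_3 \circ p_d$, which follows from the Schr\"oder equations $p_d \circ \mathcal Z = \mathcal Z \circ L_d$ and $p_3 \circ \mathcal Z = \mathcal Z \circ L_3$, since $p_3 \circ p_d \circ \mathcal Z = p_3 \circ \mathcal Z \circ L_d = \mathcal Z \circ L_3 \circ L_d = \mathcal Z \circ L_{3d} = p_{3d} \circ \mathcal Z$. Specifically, I would first construct an auxiliary quasiregular map $P_0 : A(1,e) \to \overline{B(0,e^3)}$ on the \emph{fixed} annulus $A(1,e)$, satisfying $P_0 = \mathrm{id}$ on $\{|x|=1\}$ and $P_0 = p_3$ on $\{|x|=e\}$. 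Since $|p_d(x)|=|x|^d$, the map $p_d$ carries $A(1,e^{1/d})$ into $A(1,e)$, so the composition $P := P_0 \circ p_d$ is well-defined on $A(1,e^{1/d})$. One checks directly that $P|_{\{|x|=1\}} = \mathrm{id}\circ p_d = p_d$ and $P|_{\{|x|=e^{1/d}\}} = p_3 \circ p_d = p_{3d}$, as required. Because $p_d = \mathcal Z \circ L_d \circ \mathcal Z^{-1}$ is uniformly quasiregular with a $d$-independent dilatation bound (coming from $\mathcal Z$ alone, as $L_d$ is conformal), the same is true of $P$ provided $P_0$ has $d$-independent dilatation. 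Constructing such a $P_0$ is a problem in which $d$ no longer appears.

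To build $P_0$, I would pass to Zorich log-coordinates, where $P_0$ lifts to a $G$-equivariant quasiregular map $\tilde P_0 : \R^2 \times [0,1] \to \{y_3 \le 3\}$ with $\tilde P_0 = \mathrm{id}$ on $\{y_3=0\}$ and $\tilde P_0 = L_3$ on $\{y_3=1\}$. This is the three-dimensional analogue of the planar interpolation of \cite{BL23} between $z \mapsto z^m$ and $z \mapsto z^n$, but with the simplification that the horizontal branching factor is always $3^2 = 9$, independent of $d$. I would construct $\tilde P_0$ within a single fundamental cell of the horizontal lattice associated to $G$: on the bottom face the map is the identity on the cell, while on the top face it is the $\times 3$ dilation and so covers a $3 \times 3$ grid of translated cells. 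In the interior of the cell, introduce an explicit piecewise-linear fold realising the degree-$9$ branched cover, arranged to be compatible with $G$ across the vertical faces so that it extends by $G$-equivariance to a quasiregular $\tilde P_0$ on all of $\R^2 \times [0,1]$.

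The central technical step is this explicit fold: exhibiting a bounded-dilatation piecewise-linear quasiregular three-dimensional map on the fundamental cell that effects the degree-$9$ branched cover with the prescribed boundary behaviour and the lattice-compatibility needed for $G$-equivariance. This may be carried out either by a direct simplicial construction in the spirit of \cite{BL23}, or by invoking the Berstein-Edmonds extension theorem \cite{BE79} applied to appropriate branched-cover data on the cell's boundary. The dilatation of the fold, and hence of $\tilde P_0$, is determined purely by the fold combinatorics and the factor $3$, and not by $d$. Once $\tilde P_0$ is in hand, it descends through $\mathcal Z$ to $P_0$, and the composition $P = P_0 \circ p_d$ completes the proof with a $d$-independent dilatation bound.
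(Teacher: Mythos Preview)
Your reduction $P = P_0 \circ p_d$ is correct and, once unwound, is exactly the paper's construction: the paper defines $g(x) = (\mathcal{Z} \circ \alpha)(d\,\mathcal{Z}^{-1}(x))$, and since $p_d = \mathcal{Z} \circ (x\mapsto dx) \circ \mathcal{Z}^{-1}$ this factors as $(\mathcal{Z} \circ \alpha \circ \mathcal{Z}^{-1}) \circ p_d$. So your $P_0$ is precisely the paper's map with $d=1$, and your framing is simply a cleaner way to make the $d$-independence of the dilatation transparent.

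Where the proposal is genuinely incomplete is the step you yourself flag as central: the construction of $\tilde P_0$. You describe it as a single $G$-equivariant quasiregular PL fold on the full slab $\R^2\times[0,1]$, but the paper does not (and cannot so directly) proceed this way. The paper first builds a PL \emph{homeomorphism} $\alpha'$ on one cube $[-1,1]^2\times[0,1]$ (Lemma~\ref{lem:1}) and extends it by reflections to a map $\alpha$ defined only on the slab \emph{minus} a grid of two-dimensional ``flaps'' $F$; across each flap the extension has a jump discontinuity, with the two one-sided limits lying symmetrically across $\{x_3=0\}$ (Lemma~\ref{lem:2}(iv)). These discontinuities are then repaired \emph{after} pushing forward by $\mathcal{Z}$, via a quasiconformal ``burger map'' $\beta$ (Lemma~\ref{lem:3}) that identifies the two symmetric limit values. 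This flap-plus-burger mechanism is the heart of the proof, is what actually produces the branching needed to pass from degree $d^2$ to degree $(3d)^2$, and is invisible in your sketch. A naive PL interpolation on the cube that matches $\mathrm{id}$ on the bottom and $\times 3$ on the top will not glue across the vertical faces to anything continuous on the whole slab; something like the flap device is forced. Your alternative of invoking Berstein--Edmonds is plausible in principle, but you would still have to specify compatible branched-cover data on the boundary of the fundamental cell and then verify quasiregularity of the resulting extension, none of which you have done.
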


The only similar result of which the authors are aware is a construction by Drasin and Sastry~\cite{DS03} (used in~\cite{NS17}), which gives a quasiregular interpolation in ring domains in $\R^n$ between piecewise linear power maps of different degrees. However, the construction in \cite{DS03} is somewhat technical and is achieved by increasing the degree in one coordinate direction at a time. Our construction is more amenable for the applications we have in mind. We restrict our construction to $\R^3$ and while we would expect similar results to hold in higher dimensions, we leave this open.

In section~\ref{sec5}, we illustrate the flexibility of our methods by using Theorem~\ref{thm:interp} to construct quasiregular maps that grow arbitrarily quickly or arbitrarily slowly.

The authors would like to thank Allyson Hahn for creating the figures.

\section{Preliminaries}\label{prelims}

\subsection{Quasiregular maps}\label{sect:qrmaps}

We refer to Rickman's monograph \cite{Rickman} for foundational material on quasiregular maps. The inner dilatation and outer dilatation of $f$ are denoted by $K_I(f)$ and $K_O(f)$ respectively, and the maximal dilatation is $K(f) = \max \{ K_I(f) , K_O(f) \}$.

A quasiregular map $f:\R^n \to \R^n$ is said to be of transcendental type if $f$ has an essential singularity at infinity. 
The Julia set $J(f)$ of $f$ is then defined to be the set of all $x$ such that
\begin{equation*} 
\operatorname{cap} \left(\R^n\backslash \bigcup\limits^\infty_{k=1}f^k(U)\right)=0 
\end{equation*}
for every neighbourhood $U$ of $x$. Here $\operatorname{cap} E=0$ denotes that $E$ is a set of conformal capacity zero. We refer to \cite{BN14} for a fuller discussion of this, noting here only that $J(f)$ comprises points with a certain `blowing up' property.

The quasi-Fatou set $QF(f)$ is $\R^n \setminus J(f)$. By \cite[Theorem 1.1]{BN14}, $J(f)$ is non-empty and, in fact, guaranteed to be an infinite set. Both the Julia set and quasi-Fatou set are completely invariant; that is, $x\in J(f)$ if and only if $f(x) \in J(f)$.

\subsection{Zorich maps}\label{sec:Zorich}

The  Zorich maps are quasiregular maps which generalize the complex exponential function. There is not an obvious canonical choice of Zorich map,  but rather a class of mappings. We briefly describe how to define a Zorich map $\mathcal{Z} \colon \R^3 \to \R^3 \setminus \{ 0 \}$ based on an initial choice of bi\nobreakdash-\hspace{0pt}Lipschitz map $h$ from the square $[-1,1]^2$ to the upper hemisphere $\{ y = (y_1, y_2,y_3) : |y|=1, \, y_3\ge0\}$ in $\R^3$. One example of such a map is given by (cf.~\cite[I.3.3]{Rickman})
 
\begin{equation}
\label{eq:zorich1} 
h(x_1,x_2) = \left ( \frac{ x_1 \sin (\pi M(x_1,x_2)/2 )}{\sqrt{x_1^2+x_2^2} } , \frac{ x_2 \sin (\pi M(x_1,x_2)/2) }{\sqrt{x_1^2+x_2^2} } , \cos (\pi M(x_1,x_2)/2 ) \right ),
\end{equation}
where $M(x_1,x_2) = \max \{ |x_1| , |x_2| \}$.  We do not assume that $h$ has the specific form \eqref{eq:zorich1}, but we shall impose the additional condition that
\begin{equation}\label{eq:h(0)}
    h(0,0) = (0,0,1).
\end{equation}
In the beam $[-1,1]^2 \times \R$, we define the Zorich map $\mathcal{Z}$ by
\begin{equation}
\label{eq:zorich2} 
\mathcal{Z} (x_1,x_2,x_3) = e^{x_3} h(x_1,x_2).
\end{equation}
The image of the beam under $\mathcal{Z}$ is the closed upper half-space with the origin removed. By repeatedly reflecting in sides of beams in the domain, and in the plane $\{x_3 = 0\}$ in the range, we extend the definition to obtain our Zorich map $\mathcal{Z} \colon \R^3 \to \R^3 \setminus \{ 0 \}$. 
For more details on Zorich maps, including a justification of their quasiregularity, we refer to \cite[\S6.5.4]{IM01}.

If $H\subset \R^3$ is a plane, then we denote by $R_H$ the reflection in $H$. The map $\mathcal{Z}$ is strongly automorphic with respect to the group $G$ of isometries obtained by composing an even number of reflections of the form $R_{\{x_i = n\}}$ with $i=1,2$ and $n$ odd. Here, strongly automorphic means that 
\begin{itemize}
\item for all $R\in G$, we have $Z\circ R = Z$; and
\item if $\mathcal{Z}(a) = \mathcal{Z}(b)$, then there exists $R\in G$ such that $a= R(b)$.
\end{itemize}
In particular, $\mathcal{Z}$ has periods $(4,0,0)$ and $(0,4,0)$.
Moreover, the branch set of $\mathcal{Z}$ is $\mathcal{B}(\mathcal{Z}) = \{ (m,n,x_3) : m,n \text{ odd integers, } x_3 \in \R \}$.

Throughout this paper we will denote by $\mathcal{Z}$ a Zorich map as above with associated automorphism group $G$. The condition \eqref{eq:h(0)} means that $\mathcal{Z}$ maps the $x_3$-axis into itself.

\subsection{Power maps}

If $u:\R^3 \to \R^3$ is an expanding conformal map which satisfies $u(0) = 0$ and $uGu^{-1} \subset G$, then by \cite[Theorem 21.4.1]{IM01} the Schr\"oder equation $p \circ \mathcal{Z}  = \mathcal{Z} \circ u$ has a unique uniformly quasiregular solution $p$.

For any integers $d$ and $n$ we have the identity
\begin{equation}
d \cdot R_{\{x_i=n\}}(x) = R_{\{x_i=dn\}}(dx). \label{eqn:dR}
\end{equation}   
Now let $d$ be a positive odd integer and take $u(x)=dx$. Then it follows from \eqref{eqn:dR} that $uGu^{-1} \subset G$ and so there is a unique solution to the Schr\"oder equation that we denote by $p_d$.

These maps $p_d$ are uniformly quasiregular analogues of power maps $z^d$ and were first studied by Mayer \cite{Mayer97}. See also \cite{FM20} for more on uniformly quasiregular solutions of Schr\"oder equations. We note that $|p_d(x)| = |x|^d$ and that $p_d$ has degree $d^2$. The former follows from the Schr\"oder equation and \eqref{eq:zorich2}, since if $x=\mathcal{Z}(y)$ then
\[ |p_d(x)| = |\mathcal{Z}(dy)| = e^{dy_3} = |\mathcal{Z}(y)|^d = |x|^d.\]

\section{Quasiregular Interpolation}

In this section, we prove Theorem \ref{thm:interp}.
To start, let $U= [-1,1]^2\times [0,1]$, let $V=[-3,3]^2\times [0,3]$ and denote the lower half of the vertical faces of $U$ by
\[ F_0 = \{-1,1\}\times [-1,1] \times [0,\tfrac12] \ \cup \ [-1,1]\times \{-1,1\} \times [0,\tfrac12]. \]

\begin{lemma}
\label{lem:1}
There exists a continuous, orientation-preserving, piecewise linear map  ${\alpha'\colon U\to V}$ such that 
\[ \alpha'(x) = 
\begin{cases} x, & \mbox{on the base } [-1,1]^2\times\{0\} \\
3x, & \mbox{on the top } [-1,1]^2\times\{1\}
\end{cases} \]
and $\alpha'$ maps the flaps $F_0$ into $\{x_3=0\}$ while the top half of the vertical faces of $U$ are mapped to the vertical faces of $V$. See Figure \ref{fig:1}.
\end{lemma}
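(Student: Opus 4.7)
The plan is to construct $\alpha'$ as a PL homeomorphism $U\to V$ by first specifying it on $\partial U$ so that its image realizes a tiling of $\partial V$ compatible with the prescribed boundary data, and then extending to the interior by a standard coning construction from an interior apex. The natural decomposition of $\partial U$ into ten pieces---the base, the top, the four flaps making up $F_0$, and the four upper halves of the vertical faces---will be mapped bijectively onto a compatible decomposition of $\partial V$ consisting of the central square $[-1,1]^2\times\{0\}$ on the base of $V$, the top of $V$, four trapezoids that tile the annular region $([-3,3]^2\setminus(-1,1)^2)\times\{0\}$, and the four vertical faces of $V$.

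To define $\alpha'$ on $\partial U$, send the four base corners $(\pm 1,\pm 1,0)$ to themselves, the four top corners $(\pm 1,\pm 1,1)$ to $(\pm 3,\pm 3,3)$, and the four middle corners $(\pm 1,\pm 1,\tfrac12)$ at the flap/upper-face interface to $(\pm 3,\pm 3,0)$. On the base and top use the linear formulas $x$ and $3x$. On each upper vertical face, e.g.\ $\{1\}\times[-1,1]\times[\tfrac12,1]$, use the affine map $(x_2,x_3)\mapsto(3x_2,6x_3-3)$ onto the corresponding face of $V$. On each flap, e.g.\ $\{1\}\times[-1,1]\times[0,\tfrac12]$, bisect the rectangle along a diagonal and map the two resulting triangles affinely into $\{x_3=0\}$ so that the four trapezoidal images tile the annular region of the base of $V$. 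A short edge-by-edge check confirms that these piecewise affine definitions agree on every common edge of the decomposition, so $\alpha'|_{\partial U}\colon\partial U\to\partial V$ is a well-defined continuous PL homeomorphism of topological 2-spheres; inspection of the vertex correspondence (together with the outward normals) shows that it is orientation-preserving.

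To extend to the interior, choose interior apices $p=(0,0,\tfrac12)\in\operatorname{int}(U)$ and $q=(0,0,\tfrac32)\in\operatorname{int}(V)$, and set $\alpha'(p)=q$. Cone the boundary triangulation of $\partial U$ to $p$ to triangulate $U$ into tetrahedra, one per boundary triangle, and do the same for $\partial V$ using $q$. Extend $\alpha'$ simplicially via the induced vertex correspondence, mapping each tetrahedron affinely. Since $U$ and $V$ are convex, the cones fill the respective regions exactly, so $\alpha'$ is a PL homeomorphism of $U$ onto $V$. Orientation-preservation on each tetrahedron follows from the fact that $p$ and $q$ lie in the respective interiors and that $\alpha'|_{\partial U}$ is orientation-preserving: the outward-oriented boundary triangle about $p$ maps to an outward-oriented boundary triangle about $q$, so the corresponding affine vertex map has positive Jacobian.

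The main obstacle is the boundary step, specifically verifying that the four flap trapezoids exactly tile the annular region on the base of $V$ and that the piecewise affine definitions agree along the twelve shared edges of $U$. Both checks reduce to direct computations with the vertex map and the affine formulas above; once these are in place, the coning extension is routine and Figure~\ref{fig:1} provides the geometric picture.
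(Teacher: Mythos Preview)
Your coning approach is correct and gives a valid proof of the lemma, but it is genuinely different from the paper's construction. The paper works instead on a fundamental triangular prism---the part of $U$ lying over the triangle with vertices $(0,0,0)$, $(1,1,0)$, $(0,1,0)$, which is one eighth of $U$ under the dihedral group generated by reflections in $\{x_1=0\}$, $\{x_2=0\}$, and $\{x_1=\pm x_2\}$---decomposes this prism into five explicit tetrahedra $\Delta_1,\ldots,\Delta_5$, writes down the affine formula on each, and then extends to all of $U$ by those reflections. Your method is more conceptual (the Alexander-type cone extension is standard once the boundary homeomorphism $\partial U\to\partial V$ is in hand) and sidesteps the five-case simplex computation; the paper's method, by contrast, is fully explicit and automatically builds in the dihedral symmetry $\alpha'(R_H(x))=R_H(\alpha'(x))$, which makes the subsequent reflection-extension to $\alpha$ on $\R^2\times[0,1]$ in Lemma~\ref{lem:2} a bit more transparent. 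Both constructions deliver all the boundary behaviour required downstream, including the fact that the central segment $\{(0,0,t):0\le t\le 1\}$ is mapped homeomorphically into the positive $x_3$-axis (in your version because both the apex $p=(0,0,\tfrac12)$ and the endpoints $(0,0,0)$, $(0,0,1)$ lie on that axis), which is needed later for Lemma~\ref{lem:L1}.
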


\begin{figure}[ht]
\begin{center}
\includegraphics[width=6in]{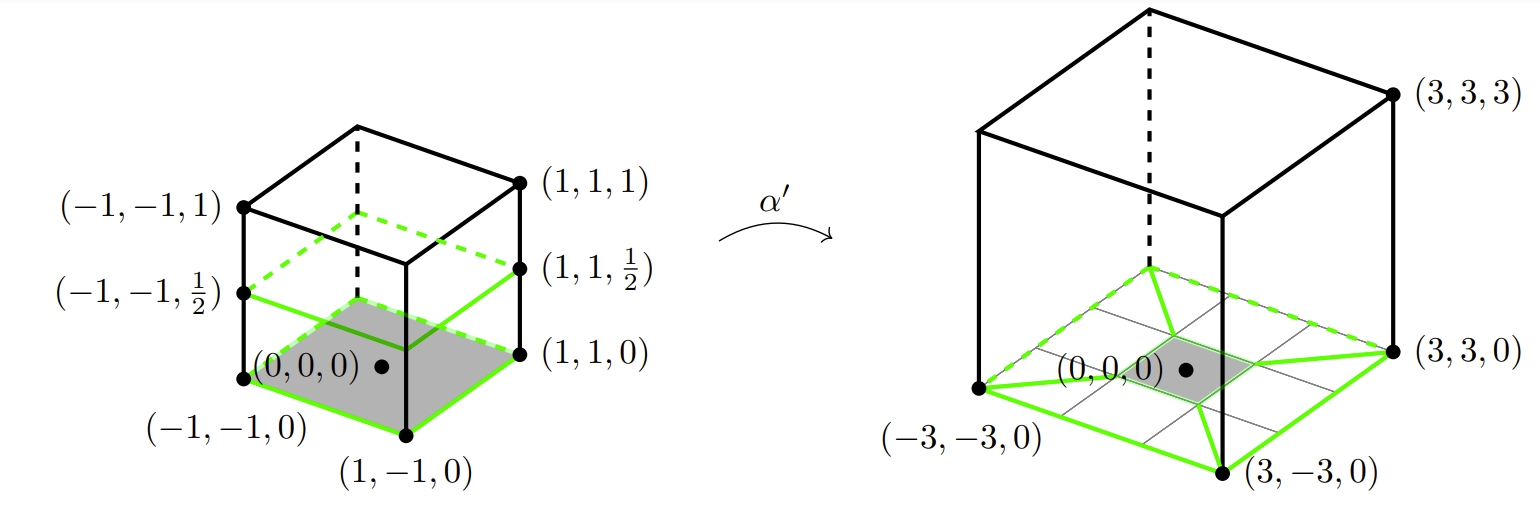}
\caption{The map $\alpha'$.}
\label{fig:1}
\end{center}
\end{figure}

\begin{proof}
Let $T_1$ be the triangle with vertices $(0,0,0)$, $(1,1,0)$ and $(0,1,0)$. We first work on the prism $\{ T_1 + (0,0,t) : t\in [0,1] \}$. We split the rectangle $[0,1]\times\{1\}\times [0,\frac12]$ into two triangles $T_2, T_3$ with vertices $(0,1,0), (1,1,0), (0,1,\tfrac12)$ and $(1,1,0),(0,1,\tfrac12),(1,1,\tfrac12)$ respectively. We also split the rectangle $[0,1]\times\{1\}\times [\frac12,1]$ into two more triangles $T_4, T_5$ with vertices $(0,1,\tfrac12),(1,1,\tfrac12),(0,1,1)$ and $(0,1,1),(1,1,\tfrac12),(1,1,1)$ respectively, see Figure \ref{fig:2}. The prism now has a simplicial decomposition into five simplices $\Delta_1, \ldots, \Delta_5$ where $\Delta_j$ is the convex hull of $T_j$ and the vertex $(0,0,1)$.

\begin{figure}[ht]
\begin{center}
\includegraphics[width=6in]{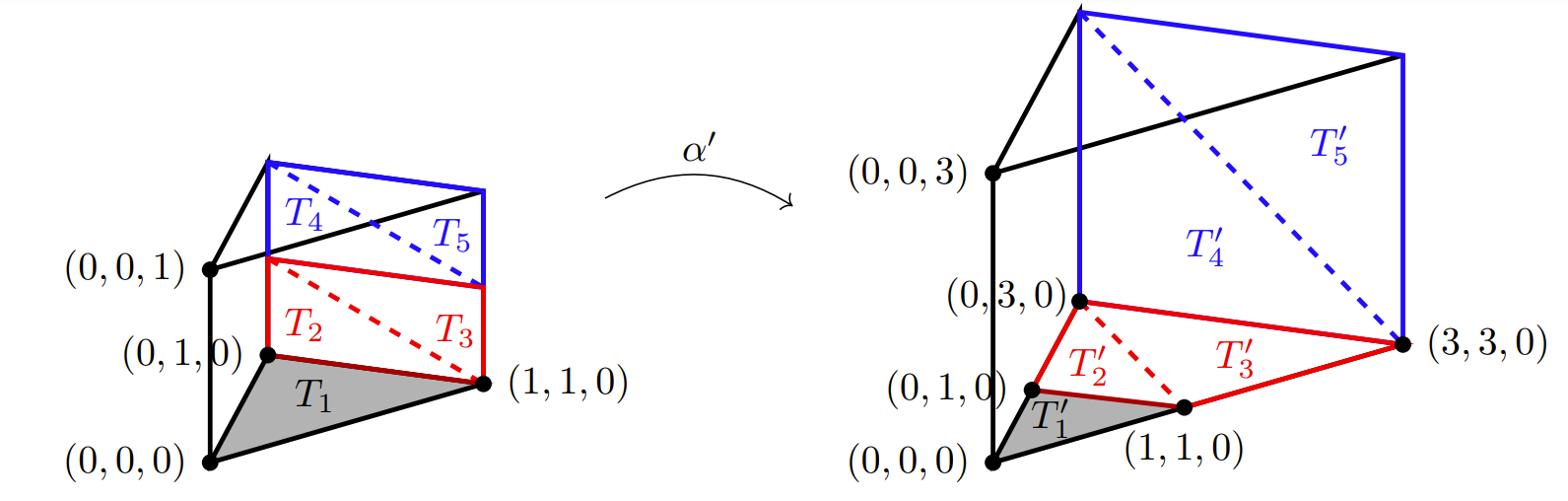}
\caption{The initial prism.}
\label{fig:2}
\end{center}
\end{figure}

In the range, we take a triangle $T_1'$ with the same vertices as $T_1$; triangle $T_2'$ with vertices $(0,1,0), (1,1,0),$ and $(0,3,0)$; triangle $T_3'$ with vertices $(1,1,0),(0,3,0),$ and $(3,3,0)$; triangle $T_4'$ with vertices $(0,3,0),(3,3,0),$ and $(0,3,3)$; and triangle $T_5'$ with vertices $(0,3,3), (3,3,0),$ and $(3,3,3)$. We form the simplices $\Delta'_j$ with face $T'_j$ and vertex $(0,0,3)$. Then there exist linear maps $\Delta_j\to \Delta'_j$ that respect vertices. Note that the map on $T_1$ is the identity, and that the map on the top triangle of the prism is $x\mapsto 3x$. We define the map $\alpha'$ on the prism by these five linear maps, noting that this is continuous, orientation-preserving and maps correctly part of one of the vertical faces of $U$.
An explicit expression for $\alpha'$ is not necessary for the argument, but is easily computed as:
\begin{itemize}
\item on $\Delta_1$, $\alpha'(x_1, x_2, x_3) = (x_1, x_2, 3x_3)$
\item on $\Delta_2$, $\alpha'(x_1, x_2, x_3) = (x_1, 5x_2+4x_3-4, 3-3x_2)$ 
\item on $\Delta_3$, $\alpha'(x_1, x_2, x_3) = (3x_1+2x_2+4x_3-4, 5x_2+4x_3-4, 3-3x_2) $ 
\item on $\Delta_4 \cup \Delta_5$, $\alpha'(x_1, x_2, x_3) = (3x_1, 3x_2, 6x_3 - 3)$.
\end{itemize}

We extend $\alpha'$ to all of $U$ by reflections in the domain and range. Specifically, if $H$ is any of the four planes $\{x_1=0\}$, $\{x_2=0\}$ or $\{x_1=\pm x_2\}$, then $\alpha'$ satisfies $\alpha'(R_H(x)) = R_H(\alpha'(x))$. Noting that such $R_H$ commute with $x\mapsto 3x$, we see that $\alpha'$ has the required properties, establishing Lemma \ref{lem:1}.
\end{proof}

Next, we shall further extend our mapping. Define a grid of flaps
\[ F = \bigcup_{i\in\Z} \R \times \{2i-1\} \times [0,\tfrac12) \ \cup \ 
\bigcup_{j\in\Z} \{2j-1\} \times \R  \times [0,\tfrac12) \]
and define an orientation-preserving piecewise linear map
\[ \alpha\colon \left(\R^2\times [0,1]\right)\setminus F \to \R^2 \times [0,3] \]
by extending $\alpha'$ by reflections in the domain and range. Specifically, we take
\begin{equation}
  \alpha(R_{\{x_i=n\}}(x)) = R_{\{x_i=3n\}}(\alpha(x)) \label{defn_alpha}
\end{equation} 
  for $i\in\{1,2\}$ and $n$ an \emph{odd} integer. Note that this map $\alpha$ is well-defined and continuous (on the stated domain) because $\alpha'$ mapped the rectangles above the flaps $F_0$ to the vertical faces of $V$.

\begin{lemma}
\label{lem:2}
The mapping $\alpha$ has the following properties, in which $G$ is the automorphism group of the Zorich map $\mathcal{Z}$.
\begin{enumerate}
\item $\alpha(u, v, 1) = (3u, 3v, 3)$.
\item If $n,m\in\Z$ and $|u-2n|<1$ and $|v-2m|<1$, 
then $\alpha(u, v, 0) = (u+4n, v+4m, 0)$.
\item If $R\in G$ then there exists $R'\in G$ such that $\alpha(R(x)) = R'(\alpha(x))$.
\item If $\zeta$ lies on exactly one flap in $F$ then there exist two points $\zeta^+$ and $\zeta^-$ in $\{x_3=0\}$, symmetric across some plane of the form $\{x_1 = \mbox{odd integer}\}$ or $\{x_2 = \mbox{odd integer}\}$ such that ${\alpha(x)\to\zeta^+}$ as $x\to\zeta$ from one side of the flap and $\alpha(x)\to\zeta^-$ as $x\to\zeta$ from the other. If $\zeta$ lies on the intersection of two flaps in $F$, then there exist four points in $\{x_3=0\}$ symmetric across a plane $\{x_1 = \mbox{odd}\}$ and a plane $\{x_2 = \mbox{odd}\}$ such that $\alpha(x)$ tends to one of these points as $x\to\zeta$ from each of the four square beams around $\zeta$.
\end{enumerate}
\end{lemma}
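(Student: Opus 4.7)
The plan is to derive each of (1)--(4) directly from the construction of $\alpha$ as the reflection extension of $\alpha'$, i.e.\ from the rule \eqref{defn_alpha}, together with the explicit piecewise-linear formulas for $\alpha'$ established in the proof of Lemma~\ref{lem:1}. The unifying technique is to push a point of interest back into the base cube $U$ by a word of reflections in odd-valued vertical planes in the domain, and then to read off the value of $\alpha$ by performing the matching word of reflections in the range, with each plane $\{x_i=n\}$ ($n$ odd) replaced by $\{x_i=3n\}$.

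For (1) and (2), I would exploit the commutation identity $3\cdot R_{\{x_i=n\}}(x)=R_{\{x_i=3n\}}(3x)$, which is exactly \eqref{eqn:dR} for $d=3$, together with its trivial $d=1$ analogue $R_{\{x_i=n\}}\circ\mathrm{id}=\mathrm{id}\circ R_{\{x_i=n\}}$. On the top face $\{x_3=1\}$, $\alpha'(x)=3x$; the identity above shows that the reflection extension continues to satisfy $\alpha(x)=3x$ throughout the slice $\{x_3=1\}$, giving (1). On the base slice $\{x_3=0\}$, $\alpha'$ is the identity on $[-1,1]^2\times\{0\}$. A point $(u,v,0)$ with $|u-2n|<1$ and $|v-2m|<1$ lies in the fundamental domain $[2n-1,2n+1]\times[2m-1,2m+1]\times\{0\}$, so it may be written as $(\sigma_1 t+2n,\sigma_2 s+2m,0)$ with $(t,s)\in(-1,1)^2$ and signs $\sigma_i=\pm 1$. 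By iterating \eqref{defn_alpha}, the image is obtained by replacing each domain reflection plane $\{x_i=k\}$ ($k$ odd) by the range plane $\{x_i=3k\}$; tracking signs and translates gives $\alpha(u,v,0)=(\sigma_1 t+6n,\sigma_2 s+6m,0)=(u+4n,v+4m,0)$, establishing (2).

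Property (3) is essentially immediate from \eqref{defn_alpha}: the group $G$ is generated by even compositions of $R_{\{x_1=1\}}$ and $R_{\{x_2=1\}}$; each generator satisfies $\alpha\circ R_{\{x_i=1\}}=R_{\{x_i=3\}}\circ\alpha$; and $R_{\{x_i=3\}}$ is again a reflection in an odd-valued plane, so even compositions of these stay in $G$. Iteration yields the required $R'\in G$ for every $R\in G$. For (4), I would invoke the explicit formula $\alpha'(x_1,x_2,x_3)=(x_1,5x_2+4x_3-4,3-3x_2)$ on $\Delta_2$: approaching a flap point $\zeta=(x_1^0,1,x_3^0)$ with $x_3^0\in[0,\tfrac12)$ from the side $x_2<1$ yields the limit $\zeta^+=(x_1^0,1+4x_3^0,0)\in\{x_3=0\}$, while by \eqref{defn_alpha} the approach from $x_2>1$ gives $\zeta^-=R_{\{x_2=3\}}(\zeta^+)=(x_1^0,5-4x_3^0,0)$, symmetric to $\zeta^+$ across the plane $\{x_2=3\}$, as required. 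All other single-flap cases are handled identically using (3), and the four-beam case at an intersection of two flaps follows by applying two such reflections, producing four limit points pairwise symmetric across two odd-integer planes.

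The only real bookkeeping concern is that the iterated reflection construction is consistently defined, so that two distinct reflection-word representations of the same point give the same image. This is precisely the compatibility that Lemma~\ref{lem:1} was designed to ensure: because $\alpha'$ sends the rectangles above the flaps $F_0$ onto the vertical faces of $V$, the two sides of any reflection plane abut correctly both in the domain and in the range. Granted this, all four parts reduce to a single mechanical computation on one simplex combined with repeated application of \eqref{defn_alpha}.
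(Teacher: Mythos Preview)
Your proposal is correct and follows essentially the same approach as the paper: both derive all four items from the reflection rule \eqref{defn_alpha} combined with the boundary behaviour of $\alpha'$ from Lemma~\ref{lem:1}, using the commutation identity \eqref{eqn:dR} with $d=3$ for part (i) and a reflection-word/inductive computation for part (ii). The only minor imprecision is your remark that ``all other single-flap cases are handled identically using (3)'': property (iii) concerns the group $G$ of even reflection words, which does not act transitively on the flaps, so you also need the $D_4$-symmetries of $\alpha'$ built into Lemma~\ref{lem:1} together with \eqref{defn_alpha} itself (as the paper does) to reach every flap; this is a bookkeeping point and does not affect the argument.
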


\begin{proof}

Recalling from section \ref{sec:Zorich} that $G$ is exactly compositions of even numbers of reflections of form $R_{\{x_i=n\}}$ with $i=1,2$ and $n$ odd, we see that (iii) follows immediately from \eqref{defn_alpha}.

To prove (iv), note that if $\zeta$ lies on a flap then $\alpha(x)$ certainly tends to some limit as $x\to\zeta$ from within one beam. The symmetry with respect to limits from different sides or beams follows from~\eqref{defn_alpha}.

We next take $x=(u, v, 1)$ and aim to show (i). There exists $y\in [-1,1]^2\times\{1\}$ such that $x=S(y)$ where $S$ is some composition of up to four reflections of the form $R_{\{x_i=n\}}$ with $i=1, 2$ and $n$ odd. Let $\tilde{S}$ be the corresponding composition in which each reflection plane is multiplied by~$3$; for example, if $S=R_{\{x_2=3\}}R_{\{x_1=-5\}}$ then $\tilde{S}=R_{\{x_2=9\}}R_{\{x_1=-15\}}$. Now, using~\eqref{defn_alpha}, Lemma~\ref{lem:1}, and~\eqref{eqn:dR} with $d=3$, we obtain
\[ \alpha(x) = \alpha(S(y)) = \tilde{S}(\alpha(y)) = \tilde{S}(3y) = 3S(y) = 3x. \]

Note that property (ii) holds when $n=m=0$ by Lemma \ref{lem:1}. Hence by induction it will suffice to show that if property (ii) holds on one square, then it holds on any neighbouring square. So we suppose that (ii) holds for some $n,m$ and without loss of generality try to prove it for $n+1, m$. Thus we take $u$, $v$ with $|u - 2(n+1)|<1$ and $|v - 2m|<1$, and note that 
\[ R_{\{x_1=2n+1\}}(u, v, 0) = (4n+2 - u, v, 0) \]
lies in the ``$n, m$'' square $(2n-1, 2n+1) \times (2m-1, 2m+1)\times\{0\}$. So, using \eqref{defn_alpha} and the inductive hypothesis, we get
\begin{align*}
\alpha(u, v, 0) & = \alpha(R_{\{x_1=2n+1\}}(4n+2 - u, v, 0))\\
&= R_{\{x_1=6n+3\}}(\alpha(4n+2 - u, v, 0))\\
&= R_{\{x_1=6n+3\}}(8n+2-u, v+4m, 0)) \\
&= (u + 4(n+1), v+4m, 0)
\end{align*}
as required. This completes the proof of Lemma \ref{lem:2}.
\end{proof}

We are now ready to define a function which is almost the interpolating map $P$ that we seek, except that it has discontinuities across flaps $\mathcal{Z}(\frac{1}{d} F)$ attached to the unit sphere, see Figure \ref{fig:3}. We will later use the fact that Lemma \ref{lem:2}(iv) tells us that these jumps happen in a simple way.

\begin{figure}[ht]
\begin{center}
\includegraphics[width=3in]{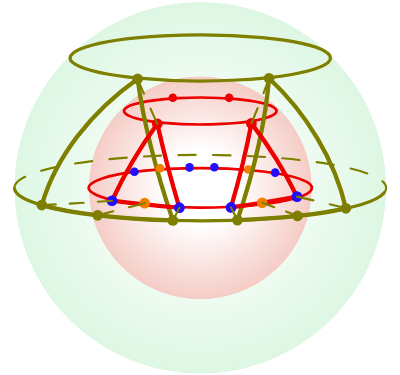}
\caption{The image of some of the flaps for the Zorich map $\mathcal{Z}$ based on \eqref{eq:zorich1}. The red, blue and orange dots indicate the images of the corners of the grid and the green dots indicate the corners of the corresponding flaps.}
\label{fig:3}
\end{center}
\end{figure}

Define $g\colon A(1,e^{1/d})\setminus \mathcal{Z}(\tfrac{1}{d} F)\to A(1,e^3)$ by
\begin{equation} 
\label{eq:funcg}
g(x) = (\mathcal{Z} \circ \alpha)(d \mathcal{Z}^{-1}(x)). 
\end{equation}
This is well-defined because if $\mathcal{Z}(y_1)=\mathcal{Z}(y_2)$, then the strong automorphy of $\mathcal{Z}$ with respect to $G$, together with \eqref{eqn:dR} and Lemma~\ref{lem:2}(iii), yield $R'\in G$ such that $\alpha(dy_1)=R'(\alpha(dy_2))$. It follows that $g$ is quasiregular (on the interior of the stated domain), with dilatation independent of $d$. We observe that:
\begin{itemize}
\item if $|x|=1$ then the third component of $d\mathcal{Z}^{-1}(x)$ is $0$, so by Lemma \ref{lem:2}(ii) we get that 
\[g(x) = \mathcal{Z}(d\mathcal{Z}^{-1}(x)) = p_d(x);\]
\item if $|x|=e^{1/d}$ then the third component of $d\mathcal{Z}^{-1}(x)$ is $1$, so by Lemma \ref{lem:2}(i) we get that
\[ g(x) = \mathcal{Z}(3d\mathcal{Z}^{-1}(x)) = p_{3d}(x). \]
\end{itemize}

In order to prove Theorem~\ref{thm:interp}, it remains to deal with the discontinuities of $g$ across the flaps $\mathcal{Z}(\frac{1}{d} F)$. To do this, we follow \cite{BL19} by introducing a map $\beta$ to ``close the gap'' between the images of points on opposite sides of these flaps. The image points that need to be stuck back together are symmetric across the $\{x_3=0\}$ plane. 

\begin{lemma}
\label{lem:3}
There exists a $9$-quasiconformal map 
\[ \beta \colon \{|x|>1\} \to \R^3 \setminus \{x_1^2+x_2^2\le 1, \, x_3=0\} \]
from the complement of the unit ball $\overline{B(0,1)}$ to the complement of a disc, such that $\beta$ extends continuously to $\partial B(0,1)$ with
\begin{equation}
\beta(x_1, x_2, x_3) = \beta(x_1, x_2, -x_3) \quad \mbox{when } x_1^2+x_2^2+x_3^2=1, \label{beta(-x_3)}
\end{equation} 
and such that $\beta$ is the identity outside the set
\begin{equation}
\label{eq:X} 
X:= \left( B((0,0,1), \sqrt{2}) \cup B((0,0,-1), \sqrt{2})\right) \setminus B(0,1). \
\end{equation}
\end{lemma}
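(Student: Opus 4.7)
The plan is to exploit the rotational symmetry of the problem around the $x_3$-axis, reducing the construction to a planar ``burger'' map. Writing $r = \sqrt{x_1^2+x_2^2}$ and $z = x_3$, the source $\{|x|>1\}$, the target complement of the disc, and the set $X$ are all invariant under rotations about the $x_3$-axis, so I would construct $\beta$ of the rotationally symmetric form
\[ \beta(r\cos\theta, r\sin\theta, z) = (U(r,z)\cos\theta, U(r,z)\sin\theta, V(r,z)), \]
where $\beta_2 = (U, V)$ is a quasiconformal map of the meridional half-plane $\Pi = \{(r,z):r\geq 0\}$ satisfying the analogous boundary conditions on 2D cross-sections.

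The first step is to analyse the cross-sections. The intersection of $X$ with $\Pi$ is the disjoint union of two ``triangular'' regions $T_+\subset\{z>0\}$ and $T_-\subset\{z<0\}$, each bounded by a quarter of the unit circle, an arc of a big circle $r^2+(z\mp 1)^2=2$, and a segment on the $z$-axis, with the two regions meeting only at the equatorial point $(1,0)$; the target cross-section is $\Pi \setminus ([0,1]\times\{0\})$. A direct tangent-line computation shows that at the equatorial corner $(1,0)$ the source wedge opens at $45^\circ$ (between the unit-circle tangent $(0,1)$ and the big-circle tangent $(1,1)/\sqrt{2}$), while the target wedge opens at $135^\circ$ (between the segment tangent $(-1,0)$ and the same big-circle tangent), and at the other two corners both wedges open at $90^\circ$. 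Since $\beta_2$ is forced to be the identity on the outer big arc (by the identity-outside-$X$ condition) and to flatten the unit arc onto the segment, this three-fold angular mismatch obstructs conformality at the equator and forces 2D dilatation at least $3$ there.

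I would then define $\beta_2$ piecewise. Outside $T_+\cup T_-$, set $\beta_2 = \mathrm{id}$. Inside $T_+$, the key boundary correspondences are: big arc fixed pointwise, quarter unit arc flattened via $(\sin\Theta,\cos\Theta)\mapsto(\sin\Theta,0)$, and $z$-axis segment $\{0\}\times[1,1+\sqrt{2}]$ mapped affinely to $\{0\}\times[0,1+\sqrt{2}]$. An explicit choice uses the ray foliation from the origin away from the equatorial corner: with $R(\Theta) = \cos\Theta+\sqrt{\cos^2\Theta+1}$ denoting the radius at which the ray meets the big circle, one sets
\[ \beta_2(\rho\sin\Theta,\rho\cos\Theta) = ((1-s+sR(\Theta))\sin\Theta,\, sR(\Theta)\cos\Theta), \quad s = \tfrac{\rho-1}{R(\Theta)-1}, \]
which preserves the first coordinate ($U \equiv r$). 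Near the equator this is replaced by a local wedge-opening model in polar coordinates centred at $(1,0)$, stretching angles from $[\pi/4,\pi/2]$ to $[\pi/4,\pi]$ by a linear rescaling and achieving dilatation exactly $3$, with a bi-Lipschitz transition joining the two regimes. In $T_-$ set $\beta_2(r,-z) = (U(r,z),-V(r,z))$. A standard Jacobian calculation then shows $\beta_2$ is $3$-quasiconformal on each piece, and Theorem \ref{thm:remov} applied to the 1-dimensional transition interfaces yields global $3$-quasiconformality on $\Pi$.

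To pass to 3D, observe that in the orthonormal cylindrical basis (radial, axial, azimuthal) the derivative $D\beta$ is block-diagonal with $D\beta_2$ in the $2\times 2$ block and the azimuthal scaling $U(r,z)/r$ in the $1\times 1$ block. Since $U/r$ equals $1$ identically where $U\equiv r$ and is bounded comparable to the 2D singular values inside the wedge model, the three singular values of $D\beta$ are controlled, and the identity $K_O = \sigma_{\max}^3/J$ together with its inner analogue yields $K_O(\beta), K_I(\beta) \leq 9$ by direct calculation. Theorem \ref{thm:remov} applied once more to the $x_3$-axis and the unit sphere (both $\sigma$-finite $2$-dimensional sets in $\R^3$) gives the global bound. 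The continuous extension to $\partial B(0,1)$ and the symmetry \eqref{beta(-x_3)} follow because both hemispheres project to the same disc under the flattening, and the identity condition outside $X$ is immediate from the construction. The principal obstacle is engineering the wedge-opening map near the equator so that it achieves dilatation exactly $3$ while keeping the azimuthal scaling uniformly bounded, and matching it to the ray interpolation without introducing further distortion.
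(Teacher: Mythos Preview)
Your approach is genuinely different from the paper's, and it has a real gap that you yourself flag in the last sentence. You reduce by rotational symmetry about the $x_3$-axis to a planar map $\beta_2$, build $\beta_2$ piecewise from a ray-foliation shear and a local angle-tripling wedge model at the equator, and then lift. The problem is the phrase ``with a bi-Lipschitz transition joining the two regimes'': any such transition will add distortion, so the composite $\beta_2$ will have planar dilatation strictly greater than $3$ somewhere, and the lifted map will exceed $9$. You also assert without verification that the ray-foliation piece has planar dilatation at most $3$; this shear is not obviously that tame, and in any case the lift bound $K_O\le 9$ from $K_2\le 3$ requires the azimuthal singular value $U/r$ to lie between the two planar singular values, which you do not check in the wedge region where $U\ne r$. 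So as written the construction does not deliver the sharp constant $9$, only some finite $K$.

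The paper avoids all of this by \emph{not} using the rotational symmetry. It writes $\beta=\mu\circ\nu\circ\mu$, where $\mu$ is a single self-inverse M\"obius inversion (centred at $(1,0,0)$) sending $\{|x|>1\}$ to the half-space $\{x_1>0\}$, and $\nu$ is the identity on the wedge $\{x_1\ge|x_3|\}$ and triples the angle $\phi$ (in cylindrical coordinates $x_1=\rho\cos\phi$, $x_3=\rho\sin\phi$) on the complementary wedges. Since $\mu$ is conformal it contributes no distortion, and $\nu$ is a single global map whose derivative in an orthonormal frame is $\operatorname{diag}(1,1,3)$, giving $K_O(\nu)=9$, $K_I(\nu)=3$ on the nose, with no transition zones at all. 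The calculation $\mu(\{x_1=\pm x_3\})=\partial B((0,0,\pm1),\sqrt2)$ then yields the identity outside $X$ for free, and the boundary symmetry \eqref{beta(-x_3)} drops out of the explicit formula for $\mu$ on the unit sphere. What this buys over your approach is precisely the sharp constant: one conformal conjugation of one homogeneous angle-stretch, rather than a patchwork whose seams cost distortion.
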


\begin{proof}

Following the planar case \cite{BL19}, we define $\beta:= \mu \circ \nu \circ \mu$ where $\mu = \mu^{-1}$ is the self-inverse orientation-reversing M\"{o}bius map of $\R^3\cup\{\infty\}$ (sending $\{|x|>1\}$ to $\{x_1>0\}$) given by
\[ \mu(x_1, x_2, x_3) = \frac{2(x_1-1, x_2, x_3)}{|(x_1-1, x_2, x_3)|^2} + (1,0,0) = (1+2r(x_1-1), 2rx_2, 2rx_3), \]
where $r=r(x_1, x_2, x_3) = ((x_1-1)^2 + x_2^2 + x_3^2)^{-1}$, and 
\[ \nu\colon \{x_1>0\} \to \R^3 \setminus \{x_1\le 0, \, x_3=0\} \]
is the identity in the wedge $\{x_1\ge |x_3| \}$ and triples the angle of the remaining wedges ${\{0< x_1<|x_3|\}}$. (More precisely, using cylindrical co-ordinates, $\nu(\rho\cos\phi, x_2, \rho\sin\phi)=(\rho\cos\phi', x_2, \rho\sin\phi')$ where $\rho^2=x_1^2+x_3^2$ and for $\frac\pi4 < \pm\phi<\frac\pi2$ we take $\phi'=3(\phi\mp\frac\pi4)\pm\frac\pi4$.)  The dilatation of $\nu$ works out as for the winding map (e.g.~\cite[I.3.1]{Rickman}) with $K_O(\nu)=3^2=9$ and $K_I(\nu)=3$. The composition  $\beta = \mu \circ \nu \circ \mu$ is thus orientation-preserving and quasiconformal with $K(\beta)=9$.

A calculation shows that 
\[ \mu(\{x_1 = \pm x_3\}) = \partial B((0,0,\pm 1), \sqrt{2}) \]
from which it can be seen that $\beta$ is the identity outside $X$. When $x_1^2 + x_2^2 + x_3^2 = 1$ we get that $r = \frac12(1-x_1)^{-1}$ and so 
\begin{equation}
\mu(x_1, x_2, x_3) = \left(0, \frac{x_2}{1-x_1}, \frac{x_3}{1-x_1}\right); \label{mu_when_|x|=1}
\end{equation} 
thus for such points $(\nu\circ\mu)(x_1, x_2, x_3) = (\nu\circ\mu)(x_1, x_2, -x_3) = \left(-\frac{|x_3|}{1-x_1}, \frac{x_2}{1-x_1}, 0\right)$, which yields \eqref{beta(-x_3)}. The formula \eqref{mu_when_|x|=1} also serves to help justify the claim that $\mu$ maps $\{|x|>1\}$ to $\{x_1>0\}$, as well as showing that $\mu$ fixes pointwise the circle $\{(x_1,x_2,x_3): x_1=0, \, x_2^2+x_3^2=1\}$.
\end{proof}

We are now in a position to complete the proof of Theorem \ref{thm:interp}.

\begin{proof}[Proof of Theorem~\ref{thm:interp}]
First, note that by \eqref{eq:X}, we have $X \subset A(1,e^3)$. Let $E = A(1,e^{1/d})\setminus \mathcal{Z}(\tfrac{1}{d} F)$ be the domain of the function $g$ defined in \eqref{eq:funcg}. The pre-image of $X$ under $g$ consists of $2(3d)^2$ components, $2d^2$ of which neighbour the sphere $\{|x|=1\}$ with the remaining $16d^2$ neighbouring the flaps $\mathcal{Z}(\frac{1}{d}F)$. Let $W\subset E$ denote the union of those components neighbouring the flaps. For $x\in E$, we define 
\[ P(x) = \begin{cases} \beta \circ g(x), & x \in W, \\
g(x), & x \in  E\setminus W.
\end{cases} \]
This is continuous on $\partial W \cap E$ since $\beta(x)=x$ on $\partial X$. The map $P$ is quasiregular on $E$ with dilatation independent of $d$ as it is a composition of such maps. Moreover, $P$ extends continuously across the flaps $\mathcal{Z}(\tfrac{1}{d} F)$.
 This follows from the definition of $g$, Lemma \ref{lem:2}(iv) and \eqref{beta(-x_3)}: as $x$ approaches a flap from different sides, $g(x)$ approaches two values symmetric across the $\{x_3=0\}$ plane, and these two values are mapped to a single value by $\beta$. The extended map $P$ is quasiregular on $A(1,e^{1/d})$ (with the same dilatation) since it is locally Lipschitz by construction and the flaps have measure zero.

When $|x|=1$ or $|x|=e^{1/d}$ we have that $x\notin W$, so we obtain that $P$ interpolates between $p_d$ and $p_{3d}$ as we have already shown this for $g$. 
\end{proof}

Denote by $L \subset \R^3$ the half-line $\{(0,0,t): t\geq 0\}$.

\begin{lemma}
\label{lem:L1}
The function $P$ in the proof of Theorem~\ref{thm:interp} maps $L\cap A(1,e^{1/d})$ homeomorphically into~$L$.
\end{lemma}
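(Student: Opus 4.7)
The plan is to directly compute $P(0, 0, t)$ for $t \in [1, e^{1/d}]$ by unwinding the construction, and show that $P(0, 0, t) = (0, 0, t^{3d})$. Since $t \mapsto t^{3d}$ is a strictly increasing continuous bijection from $[1, e^{1/d}]$ onto $[1, e^3]$, the parametrization $t \mapsto (0, 0, t)$ of $L \cap A(1, e^{1/d})$ then gives the desired homeomorphism into $L$.

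First, I would identify the natural branch of $\mathcal{Z}^{-1}$ along $L$. Since $h(0, 0, 0) = (0, 0, 1)$, formula \eqref{eq:zorich2} gives $\mathcal{Z}(0, 0, s) = (0, 0, e^s)$ on the central axis of the beam $[-1, 1]^2 \times \R$, so the beam branch satisfies $\mathcal{Z}^{-1}(0, 0, t) = (0, 0, \log t)$ with $\log t \in [0, 1/d]$. By strong automorphicity, the full preimage $\mathcal{Z}^{-1}\{(0, 0, t)\}$ is $\{(2n, 2m, \log t) : n, m \in \Z\}$, and since $(2n, 2m)$ never has a coordinate of the form $(2k-1)/d$ (by parity), the point $(0, 0, t)$ avoids the flap set $\mathcal{Z}(\tfrac{1}{d} F)$, so $g(0, 0, t)$ is defined. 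The point $d \cdot \mathcal{Z}^{-1}(0, 0, t) = (0, 0, d\log t)$ has third coordinate $d\log t \in [0, 1]$, so as the convex combination $(d\log t)(0, 0, 1) + (1 - d\log t)(0, 0, 0)$ it lies in the simplex $\Delta_1$ from the proof of Lemma \ref{lem:1}. On $\Delta_1$ the explicit formula $\alpha(x_1, x_2, x_3) = (x_1, x_2, 3x_3)$ from Lemma \ref{lem:1} applies, so $\alpha(0, 0, d\log t) = (0, 0, 3d\log t)$, and applying $\mathcal{Z}$ yields
\[
g(0, 0, t) = \mathcal{Z}(0, 0, 3d\log t) = (0, 0, e^{3d\log t}) = (0, 0, t^{3d}).
\]

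Second, I would verify that $(0, 0, t) \notin W$, so that $P(0, 0, t) = g(0, 0, t)$. If $(0, 0, t^{3d}) \notin X$ then $(0, 0, t) \notin g^{-1}(X) \supseteq W$ and there is nothing more to check. Otherwise, $(0, 0, 1)$ lies in $g^{-1}(X)$ (since $g(0, 0, 1) = (0, 0, 1) \in X$), and the arc $\{(0, 0, s) : 1 \leq s \leq t\}$ is a connected subset of $g^{-1}(X)$ terminating on the inner sphere $\{|x| = 1\}$. Hence the component of $g^{-1}(X)$ containing $(0, 0, t)$ meets this sphere, so it is one of the $2d^2$ components neighbouring $\{|x| = 1\}$ rather than one of the $16d^2$ components neighbouring the flaps, and so lies outside $W$. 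This component identification is the only delicate step, but it is handled cleanly by the connectedness of $L \cap g^{-1}(X)$ and its terminus on the unit sphere; combining everything gives $P(0, 0, t) = (0, 0, t^{3d})$ as required.
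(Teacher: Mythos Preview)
Your proof is correct and follows essentially the same route as the paper: compute $g$ on $L$ by tracing through $\mathcal{Z}^{-1}$, $\alpha$, and $\mathcal{Z}$, then check that $L$ avoids the set $W$ so that $P=g$ there. You are simply more explicit than the paper, deriving the closed formula $P(0,0,t)=(0,0,t^{3d})$ and supplying a connectedness argument to show the relevant component of $g^{-1}(X)$ neighbours the unit sphere rather than the flaps; the paper states both of these facts without the detailed justification.
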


\begin{proof}
The map $\alpha'$ from Lemma \ref{lem:1} maps the line segment from $(0,0,0)$ to $(0,0,1)$ homeomorphically onto the line segment from $(0,0,0)$ to $(0,0,3)$, see Figure \ref{fig:2}. Also, the Zorich map  $\mathcal{Z}$ maps the $x_3$-axis homeomorphically into $L$ by \eqref{eq:h(0)} and \eqref{eq:zorich2}. Then it follows by \eqref{eq:funcg} that $g$ maps $L\cap A(1,e^{1/d})$ homeomorphically into $L$. (In fact, $g(0,0,t)=(0,0,t^{3d})$ for $1\le t \le e^{1/d}$.) As $L$ avoids the set $W$ consisting of neighbourhoods of the flaps, it follows that $P$ has the same property.
\end{proof}

\section{Spherical Julia components and hollow quasi-Fatou components}

In this section we will construct the quasiregular map which has the dynamical properties claimed in Theorem \ref{thm:hollow}. 

\subsection{Constructing the map}

The first step is to generalize the interpolation map from Theorem~\ref{thm:interp} to other rings.

\begin{theorem}
\label{thm:interp_gen}
Let $d\in\N$ be odd, and let $R>0$ and $c >0$. There exists a quasiregular map 
\[ F_{R,d,c}: A(R,Re^{1/d})\to \{|x|\le  c'R^{3d}e^3\} \] that equals $c\cdot p_d$ when $|x|=R$ and equals $c' \cdot p_{3d}$ when $|x|=Re^{1/d}$. The dilatation is independent of $d, c$, and $R$. We have
\begin{equation}
\label{eq:c'}
 c' = c \cdot R^{-2d}. 
\end{equation}
\end{theorem}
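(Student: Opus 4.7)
The strategy is to obtain $F_{R,d,c}$ simply by rescaling the interpolation $P$ from Theorem~\ref{thm:interp} in both domain and range. The crucial input is the positive homogeneity
\[ p_d(\lambda x) = \lambda^d \, p_d(x) \qquad \text{for all } \lambda > 0,\ x \in \R^3. \]
To see this, I would first check using \eqref{eq:zorich1}, \eqref{eq:zorich2}, and the reflection construction of $\mathcal{Z}$ that $\mathcal{Z}(x_1, x_2, x_3 + s) = e^s \mathcal{Z}(x_1, x_2, x_3)$ for all $x \in \R^3$ and $s \in \R$; this is clear on the fundamental beam, and the reflections used to extend $\mathcal{Z}$ commute with scaling in the range, so the identity persists globally. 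Combining this with the Schr\"oder equation $p_d \circ \mathcal{Z} = \mathcal{Z} \circ (d\cdot \mathrm{id})$ and writing $\lambda = e^s$, $x = \mathcal{Z}(y)$ then gives the homogeneity at once.

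With this in hand, the natural definition is
\[ F_{R,d,c}(y) := c R^d \, P\!\left(\tfrac{y}{R}\right), \qquad y \in A(R, Re^{1/d}). \]
The dilatation is untouched because $y \mapsto y/R$ and $z \mapsto c R^d z$ are both conformal, so $K(F_{R,d,c}) = K(P)$, which is independent of $d$, $c$ and $R$ by Theorem~\ref{thm:interp}. The range bound follows from $|P(\cdot)| \le e^3$, which yields $|F_{R,d,c}(y)| \le cR^d e^3$; setting $c' := cR^{-2d}$ as in \eqref{eq:c'} turns this into $c' R^{3d} e^3$, as required.

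Finally I would verify the two boundary identities. On $|y| = R$ we have $|y/R|=1$, so Theorem~\ref{thm:interp} and the homogeneity give
\[ F_{R,d,c}(y) = c R^d \, p_d(y/R) = c R^d \cdot R^{-d} p_d(y) = c \, p_d(y). \]
On $|y| = Re^{1/d}$ we have $|y/R|=e^{1/d}$, so similarly
\[ F_{R,d,c}(y) = c R^d \, p_{3d}(y/R) = c R^d \cdot R^{-3d} p_{3d}(y) = c R^{-2d} p_{3d}(y) = c' \, p_{3d}(y), \]
confirming both the boundary condition and the relation \eqref{eq:c'}.

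There is no substantive obstacle: the only conceptual step is the homogeneity of $p_d$, and once that is noted the theorem follows by direct algebra. I would expect the authors' proof to be essentially one paragraph long.
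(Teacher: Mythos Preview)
Your proof is correct and essentially identical to the paper's: both define $F_{R,d,c}$ by conjugating $P$ with the dilations $x\mapsto x/R$ and $x\mapsto cR^d x$ (the paper writes the latter scaling factor as $c'R^{3d}$, which equals $cR^d$ via \eqref{eq:c'}), and both verify the boundary identities using the homogeneity $p_d(\lambda x)=\lambda^d p_d(x)$. Your derivation of that homogeneity from the Schr\"oder equation and the translation property of $\mathcal{Z}$ is a nice addition; the paper simply invokes the property without proof.
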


\begin{proof}
Define the dilations $\tau(x) = R^{-1}x$ and $\sigma(x) = c' R^{3d} x$. Let $P: A(1,e^{1/d}) \rightarrow \{|x| \le e^3\}$ be the quasiregular mapping from Theorem \ref{thm:interp}. We claim that 
\begin{equation}
F_{R,d,c}(x) := \sigma \circ P \circ \tau(x)
\end{equation}
is the desired mapping. This follows from the property that for all $R > 0$, we have $p_d(Rx) = R^dp_d(x)$. Indeed, if $|x| = R$, then by \eqref{eq:c'}
\begin{align*}
F_{R,d,c}(x) &= \sigma \circ P\left(\frac{1}{R}x\right) \
	= \sigma \circ p_d\left(\frac{1}{R}x\right) 
	= c'R^{3d} \cdot p_d\left(\frac{1}{R}x\right) 
	= c'R^{2d} \cdot p_d(x) 
	= c \cdot p_d(x).
\end{align*}
On the other hand, if $|x| = Re^{1/d}$, then 
\begin{align*}
F_{R,d,c}(x) &= \sigma \circ P\left(\frac{1}{R}x\right) 
= \sigma \circ p_{3d}\left(\frac{1}{R}x\right) 
= c'R^{3d}\cdot p_{3d}\left(\frac{1}{R}x\right) 
= c' \cdot p_{3d}(x), 
\end{align*}
as desired. 
\end{proof}

Let $R_1>2$, $c_1 = 1$, and define
\begin{equation}
\label{eq:d_n}
d_n = 3^n.
\end{equation}
Next, proceeding inductively, suppose that we have constructed $c_1,\dots, c_n$ and $R_1,\dots,R_n$. Define
\begin{equation}
\label{eq:parameters}
c_{n+1} = c_n R_n^{-2d_{n}} \ \ \text{   and   } \ \ R_{n+1} = c_{n+1}(2R_n)^{d_{n+1}}.
\end{equation}
Finally, define $S_n = e^{1/d_n}R_n$. The function we consider is, for $n\geq 1$,
\begin{equation}
\label{eq:QRDef}
f(x) = 
    \begin{cases}
    p_{d_1}(x) & \text{if } |x| \leq R_1 \\
    F_{R_n,d_n,c_n}(x) & \text{if } R_n \leq |x| \leq S_n \\
    c_{n+1} \cdot p_{d_{n+1}}(x) & \text{if } S_n \leq |x| \leq R_{n+1}
    \end{cases}
\end{equation}
Then $f$ is a quasiregular mapping by Theorem \ref{thm:interp_gen}. Note that we have
\begin{equation}
R_{n+1} =  c_{n+1} |p_{d_{n+1}}(x)| = |f(x)| \quad \mbox{when } |x|=2R_n.
\end{equation}

Recalling that $L$ is the non-negative $x_3$-axis, we make the following observation for later purposes.
\begin{lemma}
\label{lem:L2}
The map $f$ sends $L$ homeomorphically onto $L$.
\end{lemma}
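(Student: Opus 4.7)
The plan is to analyze $f|_L$ piece by piece using the definition in \eqref{eq:QRDef}, verify each piece sends $L$ into $L$ in a strictly monotone way, and then assemble these facts into a continuous bijection of $L$ onto itself.

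First I would pin down how the power maps act on $L$. From \eqref{eq:zorich1} and \eqref{eq:zorich2}, the chosen Zorich map satisfies $\mathcal{Z}(0,0,s) = (0,0,e^s)$, so it bijectively sends the $x_3$-axis onto $L\setminus\{0\}$. The Schr\"oder equation $p_d\circ \mathcal{Z} = \mathcal{Z}\circ(d\cdot\mathrm{Id})$ then forces $p_d(0,0,r) = (0,0,r^d)$ for $r>0$, extending continuously to $p_d(0)=0$ since $|p_d(x)| = |x|^d$. Hence the first and third pieces of $f$ both send $L$ into $L$; on the third coordinate they act as $r\mapsto r^{d_1}$ and $r\mapsto c_{n+1}r^{d_{n+1}}$ respectively, each strictly increasing.

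For the middle piece I would use the factorization $F_{R_n,d_n,c_n}=\sigma\circ P\circ\tau$, with $\tau(x)=R_n^{-1}x$ and $\sigma(x)=c_{n+1}R_n^{3d_n}x$ positive scalar dilations that preserve $L$. Lemma \ref{lem:L1} gives $P(L\cap A(1,e^{1/d_n}))\subset L$, hence $F_{R_n,d_n,c_n}(L\cap A(R_n,S_n))\subset L$. A short computation of the third coordinate, using $P(0,0,r)=(0,0,r^{3d_n})$ for $r\in[1,e^{1/d_n}]$ (which follows from the explicit formulas for $\mathcal{Z}$ and $\alpha'$ on the $x_3$-axis), shows that this piece also acts on $L$ by $r\mapsto c_{n+1}r^{d_{n+1}}$, since $3d_n = d_{n+1}$. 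This exactly matches the adjacent power-map formula.

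To conclude, the identity $c_{n+1}R_n^{d_{n+1}}=c_n R_n^{d_n}$, coming from the recursion $c_{n+1}=c_nR_n^{-2d_n}$, ensures the three pieces agree at every interface $r=R_n$, so $f|_L$ is continuous and strictly increasing in $r\in[0,\infty)$ with $f(0)=0$. Surjectivity follows from $f(0,0,2R_n)=(0,0,R_{n+1})$ (by the definition of $R_{n+1}$) together with the observation that $R_n\to\infty$, which can be read off the recursion $R_{n+1}=2^{d_{n+1}}c_nR_n^{d_n}$ once one notes $R_1>2$ and $c_1=1$ force super-exponential growth. A continuous strictly monotone surjection of $L\cong[0,\infty)$ onto itself is automatically a homeomorphism, giving the claim. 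The step I expect to require the most care is the explicit coordinate computation that $F_{R_n,d_n,c_n}$ acts on $L$ by $r\mapsto c_{n+1}r^{d_{n+1}}$; the rest is direct bookkeeping with the recursions for $c_n$, $d_n$, and $R_n$.
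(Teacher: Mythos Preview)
Your argument is correct and follows essentially the same piece-by-piece strategy as the paper: use the Schr\"oder equation to handle the scaled power maps on $L$, invoke Lemma~\ref{lem:L1} (via the dilation factorization of $F_{R_n,d_n,c_n}$) for the interpolation pieces, and glue by continuity. The paper's own proof is terser---it stops at ``each piece maps $L$ homeomorphically into $L$, and $f$ is continuous''---whereas you go further and compute the explicit formula $P(0,0,r)=(0,0,r^{3d})$ from the expression for $\alpha'$ on $\Delta_1$, which lets you verify monotonicity and surjectivity directly; this is a nice sharpening but not a different method.
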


\begin{proof}
For any odd integer $d$, the definition of the power map $p_d$ via the solution of the Schr\"oder equation $p_d\circ \mathcal{Z} = \mathcal{Z} \circ d$ implies that $p_d$ maps $L$ homeomorphically into $L$. Thus the same is true for any of the scaled power maps in the definition of $f$ from \eqref{eq:QRDef}. Moreover, as $F_{R_n,d_n,c_n}$ is obtained from $P$ by pre- and post-composing by dilations, it follows from Lemma \ref{lem:L1} that $F_{R_n,d_n,c_n}$ maps $L\cap A(R_n, S_n)$  homeomorphically into $L$. As $f$ is continuous, the lemma follows.
\end{proof}

\subsection{Covering properties}

It will be useful to know that $R_{n+1}$ is much larger than the previous term in the sequence.

\begin{lemma}
\label{lem:formula2}
For all $n \geq 1$ we have
\begin{equation*}
R_{n+1} \geq 2^{d_{n+1}} R_n^{d_{n-1}}.
\end{equation*}
\end{lemma}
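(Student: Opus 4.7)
\textbf{Proof plan for Lemma \ref{lem:formula2}.}

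The first assertion $R_2 = 2^{d_2} R_1^{d_1}$ is immediate: it is exactly the $n=1$ case of Lemma \ref{lem:formula} (the empty product equals $1$ by convention). So the work lies entirely in establishing the inequality $R_{n+1} \geq 2^{d_{n+1}} R_n^{d_{n-1}}$ for $n \geq 2$.

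The plan is to use Lemma \ref{lem:formula} to rewrite the desired inequality in a cleaner form, and then run a short induction. Starting from $R_{n+1} = 2^{d_{n+1}} R_n^{d_n} \prod_{j=1}^{n-1} R_j^{-2 d_j}$ and using the key arithmetic fact $d_n - d_{n-1} = 3^n - 3^{n-1} = 2\,d_{n-1}$, one sees that
\[
\frac{R_{n+1}}{2^{d_{n+1}} R_n^{d_{n-1}}} \;=\; R_n^{d_n-d_{n-1}} \prod_{j=1}^{n-1} R_j^{-2d_j} \;=\; \Bigl(R_n^{d_{n-1}} \Big/ \prod_{j=1}^{n-1} R_j^{d_j}\Bigr)^{\!2}.
\]
Hence it is enough to prove the auxiliary inequality
\[
(\ast)\qquad R_n^{d_{n-1}} \;\geq\; \prod_{j=1}^{n-1} R_j^{d_j} \qquad \text{for all } n \geq 2,
\]
since $(\ast)$ trivially implies the lemma via the displayed identity.

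I would prove $(\ast)$ by induction on $n$. The base case $n=2$ reads $R_2^{d_1} \geq R_1^{d_1}$, which holds because $R_2 = 2^{d_2} R_1^{d_1} \geq R_1$ (using $R_1 > 2 > 1$). For the inductive step, assume $(\ast)$ at level $n$, so that $\prod_{j=1}^{n-1} R_j^{-2d_j} \geq R_n^{-2d_{n-1}}$. Substituting into the formula of Lemma \ref{lem:formula} yields
\[
R_{n+1} \;\geq\; 2^{d_{n+1}} R_n^{d_n - 2d_{n-1}} \;=\; 2^{d_{n+1}} R_n^{d_{n-1}},
\]
which incidentally is the conclusion of Lemma \ref{lem:formula2} at index $n$. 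Raising to the power $d_n$ and using the inductive hypothesis once more gives
\[
R_{n+1}^{d_n} \;\geq\; R_n^{d_{n-1} d_n} \;=\; R_n^{d_n} \cdot R_n^{d_{n-1} d_n - d_n}.
\]
Since $(d_{n-1}-1)(d_n-1) \geq 1$ (clear as $d_{n-1},d_n \geq 3$), we have $d_{n-1}d_n \geq d_n + d_{n-1}$, so $R_n^{d_{n-1}d_n - d_n} \geq R_n^{d_{n-1}} \geq \prod_{j=1}^{n-1} R_j^{d_j}$ by the inductive hypothesis. Combining gives $R_{n+1}^{d_n} \geq \prod_{j=1}^{n} R_j^{d_j}$, which is $(\ast)$ at level $n+1$.

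The main (and only) obstacle is spotting the identity $d_n - d_{n-1} = 2 d_{n-1}$, which both produces the perfect square in the ratio above and drives the exponent bookkeeping in the inductive step; everything else is routine manipulation.
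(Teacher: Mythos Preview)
Your proof is correct. Both you and the paper argue by induction on $n$ starting from the formula of Lemma~\ref{lem:formula}, and both ultimately exploit the geometric identity $d_n - 2d_{n-1} = d_{n-1}$; so the underlying strategy is the same. The packaging differs, however: the paper inducts directly on the stated inequality, using the hypothesis to bound each factor termwise via $R_j^{-2d_j} \geq R_n^{-2d_{j-1}}$ (equivalently $R_j^3 \leq R_n$), and then sums the resulting geometric series in the exponent to obtain $R_{n+1} \geq 2^{d_{n+1}} R_n^{d_n - d_{n-1} + 1} \geq 2^{d_{n+1}} R_n^{d_{n-1}}$. Your route instead isolates the perfect-square identity $R_{n+1}/(2^{d_{n+1}} R_n^{d_{n-1}}) = \bigl(R_n^{d_{n-1}}/\prod_{j=1}^{n-1} R_j^{d_j}\bigr)^2$ and proves the auxiliary aggregate bound $(\ast)$. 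The paper's argument is a touch shorter and more direct; yours makes the structure (why the inequality holds with room to spare) more transparent.
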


\begin{proof}
    It is clear that 
    \[ R_2 = c_1R_1^{-2d_1}(2R_1)^{d_2} = 2^{d_2}R_1^{d_1} \ge R_1^3.\]
    Let $n\ge 2$ and, for the sake of induction, suppose that $R_{j+1} \geq 2^{d_{j+1}} R_j^{d_{j-1}}$ for $j = 1,\ldots,n-1$. Then in particular $R_{j+1}\ge R_j^3$ for $j=1,\ldots, n-1$, and hence $R_n\ge R_j^3$. Now, using this and \eqref{eq:parameters}, 
    \begin{align*}
R_{n+1} &= c_{n+1} (2R_n)^{d_{n+1}} \\
	&= 2^{d_{n+1}} R_n^{d_{n+1}} \prod_{j=1}^n R_j^{-2d_j} \\
	&= 2^{d_{n+1}} R_n^{d_n} \prod_{j=1}^{n-1} R_j^{-2d_j}\\
    &\ge 2^{d_{n+1}} R_n^{d_n} \prod_{j=1}^{n-1} R_n^{-2d_{j-1}}\\
    &= 2^{d_{n+1}} R_{n}^{d_n - \sum_{j=1}^{n-1}2d_{j-1}} \\
    &\ge 2^{d_{n+1}} R_{n}^{d_{n-1}}.
\end{align*}
The lemma now follows by induction.
\end{proof}

For $k\geq 1$, define the round rings
\begin{align*}
	A_k &= A\left (\frac{1}{4}R_k,4R_k \right ) \\
	B_k &= A\left (4R_k, \frac{1}{4}R_{k+1} \right ) \\
	V_k &= A\left (\frac{3}{2}R_{k}, \frac{5}{2}R_{k}\right ).
\end{align*}

Clearly $V_k\subset A_k$.

\begin{lemma}
\label{lem:akcovering}
For $k\geq 1$, we have $B(0,4R_k) \subset f(A_k)$.
\end{lemma}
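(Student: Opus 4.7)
The plan is to use only the interpolation piece of $f$. Since $A(R_k,S_k)\subset A_k$ and $f$ restricts on this sub-ring to $F_{R_k,d_k,c_k}$, it suffices to show $B(0,4R_k)\subset F_{R_k,d_k,c_k}(A(R_k,S_k))$.

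By Theorem \ref{thm:interp_gen}, $F_{R_k,d_k,c_k}$ equals $c_k p_{d_k}$ on the inner boundary $\{|x|=R_k\}$ (wrapping it around the origin onto $\{|y|=c_k R_k^{d_k}\}$ with degree $d_k^2$) and equals $c_{k+1}p_{d_{k+1}}$ on the outer boundary $\{|x|=S_k\}$ (wrapping it onto $\{|y|=c_{k+1}S_k^{d_{k+1}}\}$ with degree $d_{k+1}^2=9d_k^2$). For any $y_0$ strictly inside the inner image sphere, i.e.\ $|y_0|<c_k R_k^{d_k}$, the Brouwer degree of $F_{R_k,d_k,c_k}$ at $y_0$ on the ring $A(R_k,S_k)$, computed from the boundary data (with the inner boundary contributing with a minus sign due to the orientation of $\partial A(R_k,S_k)$), is
\[ \deg(F_{R_k,d_k,c_k}, A(R_k,S_k), y_0) \;=\; d_{k+1}^2 - d_k^2 \;=\; 8d_k^2 \;\neq\; 0. \]
Since quasiregular maps are sense-preserving, this forces $y_0$ to have a preimage in $A(R_k,S_k)$. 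Hence $B(0,c_k R_k^{d_k})\subset F_{R_k,d_k,c_k}(A(R_k,S_k))\subset f(A_k)$.

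It then remains to verify the quantitative bound $4R_k\le c_k R_k^{d_k}$. The recursions $c_{k+1}=c_k R_k^{-2d_k}$ and $R_{k+1}=c_{k+1}(2R_k)^{d_{k+1}}$ combine to give $c_k R_k^{d_k}=R_{k+1}/2^{d_{k+1}}$, so the inequality is equivalent to $R_{k+1}\ge 4R_k\cdot 2^{d_{k+1}}$. For $k=1$ this reduces to $R_1\ge 2$, which holds by assumption. For $k\ge 2$, Lemma \ref{lem:formula2} gives $R_{k+1}\ge 2^{d_{k+1}}R_k^{d_{k-1}}$; since $d_{k-1}\ge 3$ and $R_k\ge 4$ (as easily verified inductively from $R_2=2^{d_2}R_1^{d_1}$), we obtain $R_{k+1}\ge 2^{d_{k+1}}\cdot 4R_k$.

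The main step is the degree computation, which crucially relies on the interpolation wrapping its two boundary spheres around the origin with different degrees $d_k^2$ and $9d_k^2$; this difference forces the image to genuinely fill the open ball $B(0,c_k R_k^{d_k})$ rather than merely cover it annularly, and this in turn is precisely what makes the interpolation construction (as opposed to e.g. a pure power map on the ring) essential for the statement.
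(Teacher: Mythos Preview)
Your proof is correct and follows the same overall strategy as the paper: restrict attention to the interpolation sub-ring $A(R_k,S_k)\subset A_k$, show that its image under $f=F_{R_k,d_k,c_k}$ covers a ball centered at the origin, and then verify via the recursions and Lemma~\ref{lem:formula2} that the radius of this ball exceeds $4R_k$. The one genuine difference is how the covering is justified. The paper simply reads off from Theorem~\ref{thm:interp_gen} that $F_{R_k,d_k,c_k}$ maps $A(R_k,S_k)$ \emph{onto} $\overline{B(0,c_kR_k^{d_k}e^3)}$, treating the surjectivity as part of that theorem and thereby gaining the extra factor $e^3$. You instead supply an explicit Brouwer-degree computation exploiting the jump in boundary degree from $d_k^2$ to $9d_k^2$, which yields the slightly smaller ball $B(0,c_kR_k^{d_k})$; this is essentially the argument one would give to \emph{prove} the surjectivity the paper is invoking, so your version is more self-contained at the cost of a little extra work in the quantitative step. (One small remark: non-vanishing of the Brouwer degree already forces a preimage, so the appeal to sense-preservation is unnecessary, though harmless.)
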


\begin{proof}
As the ring $\{ x : R_k \leq |x| \leq S_k \}$ is contained in $A_k$, and as $f$ agrees with the map $F_{R_k,d_k,c_k}$ on this ring, it follows by Theorem \ref{thm:interp_gen} that $f(A_k)$ contains the ball $B(0,c_kR_k^{d_k}e^3)$. By \eqref{eq:parameters} and Lemma \ref{lem:formula2}, we have
\begin{align*}
c_kR_k^{d_k}e^3 &= c_{k+1} R_k^{2d_k} R_k^{d_k} e^3 \\
&= c_{k+1} R_k^{d_{k+1}} e^3 \\
&= R_{k+1}2^{-d_{k+1}} e^3 \\
&\geq R_k^{d_{k-1}} e^3 > 4R_k,
\end{align*}
from which the lemma follows.
\end{proof}

\begin{lemma}
\label{lem:bk}
For $k\geq 1$, we have $f(B_k) \subset \interior B_{k+1}$.
\end{lemma}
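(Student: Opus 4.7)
The plan is to show that on the ring $B_k$ the map $f$ coincides with the scaled power map $c_{k+1}\cdot p_{d_{k+1}}$, and then to verify by a direct calculation, using the recursive definitions of $R_k$ and $c_k$ together with Lemma \ref{lem:formula}, that the image of each boundary sphere of $B_k$ lies inside $B_{k+1}$. Monotonicity of $|f|$ in $|x|$ then does the rest.

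First I would observe that $S_k = e^{1/d_k}R_k$, and since $d_k = 3^k \geq 3$ we have $S_k < e\cdot R_k < 4R_k$. Hence the interpolation piece $F_{R_k,d_k,c_k}$ lies inside $\{|x|\leq 4R_k\}$, so on $B_k = A(4R_k, \tfrac{1}{4}R_{k+1})$ the definition \eqref{eq:QRDef} gives $f(x) = c_{k+1}\,p_{d_{k+1}}(x)$. In particular $|f(x)| = c_{k+1}|x|^{d_{k+1}}$ is a strictly increasing function of $|x|$.

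Next, I would use Lemma \ref{lem:formula} to evaluate on the inner sphere. At $|x|=4R_k$,
\[ |f(x)| = c_{k+1}(4R_k)^{d_{k+1}} = 2^{d_{k+1}}\cdot\bigl(c_{k+1}\,2^{d_{k+1}}R_k^{d_{k+1}}\bigr) = 2^{d_{k+1}}\,R_{k+1}, \]
which is at least $4R_{k+1}$ since $d_{k+1}\geq 2$. For the outer sphere, the key algebraic identity I would derive is
\[ R_{k+2} = c_{k+1}\,2^{d_{k+2}}\,R_{k+1}^{d_{k+1}}. \]
This follows on substituting $c_{k+2} = c_{k+1}R_{k+1}^{-2d_{k+1}}$ into the formula $R_{k+2} = c_{k+2}2^{d_{k+2}}R_{k+1}^{d_{k+2}}$ and using $d_{k+2}-2d_{k+1} = 3d_{k+1}-2d_{k+1} = d_{k+1}$. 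Then at $|x|=\tfrac{1}{4}R_{k+1}$,
\[ |f(x)| = c_{k+1}\,4^{-d_{k+1}}\,R_{k+1}^{d_{k+1}} = 4^{-d_{k+1}}\,2^{-d_{k+2}}\,R_{k+2} \leq \tfrac{1}{4}R_{k+2}, \]
the final inequality being immediate from $2^{-2d_{k+1}-d_{k+2}}\leq 2^{-2}$.

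Combining these two bounds with the monotonicity of $|f|$ in $|x|$ yields $4R_{k+1}\leq |f(x)|\leq \tfrac{1}{4}R_{k+2}$ for all $x\in B_k$, i.e.\ $f(B_k)\subset B_{k+1}$. I do not anticipate any real obstacle here: the argument is essentially bookkeeping, and the recurrence \eqref{eq:parameters} has been engineered so that the two boundary-sphere computations work out with room to spare.
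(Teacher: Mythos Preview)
Your proof is correct and follows essentially the same approach as the paper: identify $f|_{B_k}$ as the scaled power map $c_{k+1}\cdot p_{d_{k+1}}$, then compute $|f|$ on the two boundary spheres via Lemma~\ref{lem:formula} and the recursion \eqref{eq:parameters} to obtain $|f(x)|=2^{d_{k+1}}R_{k+1}$ and $|f(x)|=2^{-5d_{k+1}}R_{k+2}$ respectively. The only cosmetic differences are that the paper uses the sharper bound $S_k<\tfrac32 R_k$ (versus your $S_k<eR_k$) and phrases the conclusion as ``$f$ maps the round ring $B_k$ to another round ring'' rather than invoking monotonicity of $|f|$ explicitly.
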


\begin{proof}
As $S_k = e^{1/d_k}R_k < \tfrac32 R_k$, we have $f|_{B_k} = c_{k+1} \cdot p_{d_{k+1}}$. Therefore, $f$ maps the round ring $B_k$ to another round ring. When $|x| = 4R_k$, we have by \eqref{eq:parameters},
\begin{align*}
|f(x)| &= c_{k+1} |p_{d_{k+1}}(x)| \\
	&= c_{k+1} 4^{d_{k+1}} R^{d_{k+1}}_{k} \\
	&= c_{k+1} 2^{d_{k+1}} 2^{d_{k+1}} R^{d_{k+1}}_{k} \\
	&= 2^{d_{k+1}} R_{k+1} > 4 R_{k+1}.
\end{align*}
When $|x| = \frac{1}{4}R_{k+1}$, we have by \eqref{eq:parameters},
\begin{align*}
|f(x)|  &= c_{k+1} |p_{d_{k+1}}(x)| \\
	&= c_{k+1} 4^{-d_{k+1}} R^{d_{k+1}}_{k+1} \\
	&= c_{k+2} R_{k+1}^{2d_{k+1}} 4^{-d_{k+1}}R^{d_{k+1}}_{k+1} \\
        &= (c_{k+2} 2^{d_{k+2}} R_{k+1}^{d_{k+2}})\cdot (2^{-d_{k+2}}\cdot 4^{-d_{k+1}}) \\
	&= 2^{-5d_{k+1}} R_{k+2} < \frac{1}{4}R_{k+2}.
\end{align*}
It follows from these estimates that $f(B_k)$ is a round ring contained inside  $\interior B_{k+1}$.
\end{proof}

\begin{lemma}
\label{lem:vk}
For $k\geq1$, the inner boundary component of $V_k$ is mapped by $f$ into $B_k$ and the outer boundary component of $V_k$ is mapped by $f$ into $B_{k+1}$.
In particular, $A_{k+1} \subset f(V_k)$. 
\end{lemma}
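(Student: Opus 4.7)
The plan is to reduce everything to a direct computation using the explicit formula $f = c_{k+1}\cdot p_{d_{k+1}}$ on the ring $V_k$, and then combine it with Lemma \ref{lem:formula}. The key preliminary observation is that $V_k$ sits inside the annular region $S_k \le |x| \le R_{k+1}$ where $f$ is just a scaled power map. Indeed $S_k = e^{1/d_k}R_k$ with $d_k\ge 3$, so $e^{1/d_k}\le e^{1/3}<\tfrac32$, giving $S_k< \tfrac32 R_k$; on the other side, $R_{k+1}\ge c_{k+1}(2R_k)^{d_{k+1}}\ge 2R_k$ by the definition of $R_{k+1}$ (and is in fact enormously larger), so $\tfrac{5}{2}R_k\le R_{k+1}$. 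Therefore on $V_k$ we have $f(x)=c_{k+1}p_{d_{k+1}}(x)$, and since $|p_{d_{k+1}}(x)|=|x|^{d_{k+1}}$ the map sends each sphere $\{|x|=r\}$ onto the sphere $\{|y|=c_{k+1}r^{d_{k+1}}\}$ surjectively.

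Using Lemma \ref{lem:formula} in the form $R_{k+1} = c_{k+1}\,2^{d_{k+1}}R_k^{d_{k+1}}$, i.e.\ $c_{k+1}R_k^{d_{k+1}} = R_{k+1}/2^{d_{k+1}}$, I compute the two boundary spheres explicitly:
\begin{align*}
|f(x)| &= \left(\tfrac{3}{4}\right)^{d_{k+1}} R_{k+1} \qquad \text{when } |x| = \tfrac{3}{2}R_k,\\
|f(x)| &= \left(\tfrac{5}{4}\right)^{d_{k+1}} R_{k+1} \qquad \text{when } |x| = \tfrac{5}{2}R_k.
\end{align*}
To place the first sphere inside $B_k = A(4R_k,\tfrac14 R_{k+1})$, I need $(\tfrac34)^{d_{k+1}}\le \tfrac14$ (trivial since $d_{k+1}\ge 9$) and $(\tfrac34)^{d_{k+1}}R_{k+1}\ge 4R_k$; for the latter I use Lemma \ref{lem:formula2} (together with the direct case $k=1$) to bound $R_{k+1}$ from below in terms of $R_k$, which provides huge slack.

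To place the second sphere inside $B_{k+1} = A(4R_{k+1},\tfrac14 R_{k+2})$, the lower bound $(\tfrac54)^{d_{k+1}}R_{k+1}\ge 4R_{k+1}$ is immediate since $(5/4)^9>4$. For the upper bound I expand using $c_{k+2}=c_{k+1}R_{k+1}^{-2d_{k+1}}$ and $d_{k+2}=3d_{k+1}$ to get
\[
R_{k+2} \;=\; c_{k+2}\,2^{d_{k+2}} R_{k+1}^{d_{k+2}} \;=\; 2^{2d_{k+1}}\,\frac{R_{k+1}^{d_{k+1}+1}}{R_k^{d_{k+1}}},
\]
so the inequality $(\tfrac54)^{d_{k+1}}R_{k+1}\le \tfrac14 R_{k+2}$ reduces to $(\tfrac54)^{d_{k+1}}\le 2^{2d_{k+1}-2}(R_{k+1}/R_k)^{d_{k+1}}$, which again holds with room to spare because $R_{k+1}/R_k\ge 2$.

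Finally, because $c_{k+1}p_{d_{k+1}}$ is surjective from each round ring onto the corresponding round ring in the image, the image $f(V_k)$ is exactly the round ring of inner radius $(\tfrac34)^{d_{k+1}}R_{k+1}$ and outer radius $(\tfrac54)^{d_{k+1}}R_{k+1}$. Since $(\tfrac34)^{d_{k+1}}R_{k+1}\le \tfrac14 R_{k+1}$ and $(\tfrac54)^{d_{k+1}}R_{k+1}\ge 4R_{k+1}$ (by the inequalities already verified), we conclude $A_{k+1}=A(\tfrac14 R_{k+1},4R_{k+1})\subset f(V_k)$. The only mild obstacle is keeping the book-keeping of the constants $c_{k+1}$ and the telescoping relation for $R_{k+2}$ straight; once Lemma \ref{lem:formula} is applied the rest is elementary inequalities powered by $d_{k+1}\ge 9$.
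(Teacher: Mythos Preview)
Your proof is correct and follows essentially the same approach as the paper: reduce to the scaled power map on $V_k$, compute the image radii via Lemma~\ref{lem:formula}, and compare with the endpoints of $B_k$, $B_{k+1}$, $A_{k+1}$. The one substantive difference is in handling the upper bound $(\tfrac{5}{4})^{d_{k+1}}R_{k+1}\le \tfrac14 R_{k+2}$ for the outer boundary: you expand $R_{k+2}$ explicitly via $c_{k+2}=c_{k+1}R_{k+1}^{-2d_{k+1}}$ and verify the inequality directly, whereas the paper avoids this computation by invoking the Maximum Principle together with Lemma~\ref{lem:bk} (since $f(B_k)\subset B_{k+1}$, the image of $V_k$ cannot reach beyond $B_{k+1}$). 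Your route is slightly more self-contained, the paper's is slightly shorter; both are fine.
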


\begin{proof}
Again as $S_k < \tfrac32 R_k$, we have that $f|_{V_k} = c_{k+1} \cdot p_{d_{k+1}}$. So the round ring $V_k$ is mapped to another round ring. Starting with the inner boundary component of $V_k$, when $|x| = \frac{3}{2}R_k$, we have by \eqref{eq:parameters},
\begin{align*}
|f(x)| &= c_{k+1} |p_{d_{k+1}}(x)| \\
&= c_{k+1} \left(\frac{3}{4}\right)^{d_{k+1}} (2R_{k})^{d_{k+1}} \\
&= \left(\frac{3}{4}\right)^{d_{k+1}} R_{k+1} .
\end{align*}
It follows that $|f(x)| < \tfrac14R_{k+1}$. Moreover, by Lemma \ref{lem:formula2}, we have
\begin{align*}
\left(\frac{3}{4}\right)^{d_{k+1}} R_{k+1} & \geq \left(\frac{3}{4}\right)^{d_{k+1}} 2^{d_{k+1}} R_k^{d_{k-1}} \\
&= \left(\frac{3}{2}\right)^{d_{k+1}} R_k^{d_{k-1}} \\
&>4R_k.
\end{align*}
We conclude that the inner boundary component of $V_k$ is mapped into $B_k$.

Turning now to the outer boundary component, when $|x| = \frac{5}{2}R_k$, we have by \eqref{eq:parameters},
\begin{align*}
|f(x)| &= c_{k+1} |p_{d_{k+1}}(x)| \\
&= c_{k+1} \left(\frac{5}{4}\right)^{d_{k+1}} (2R_{k})^{d_{k+1}} \\
&= \left(\frac{5}{4}\right)^{d_{k+1}} R_{k+1} \\
&> 4 R_{k+1}.
\end{align*}
By the Maximum Principle, and as $f(B_k) \subset B_{k+1}$ by Lemma \ref{lem:bk}, it follows that $f(V_k)$ cannot intersect the unbounded component of the complement of $B_{k+1}$. We conclude that the outer boundary component of $V_k$ is mapped into $B_{k+1}$.

As $A_{k+1}$ is sandwiched between $B_k$ and $B_{k+1}$, it follows that $A_{k+1} \subset f(V_k)$.
\end{proof}

For $k\geq 1$ and $n\geq 0$, define 
\begin{equation}
\Gamma_{k,n} = \{x: f^j(x) \in V_{k+j} \text{ for all } j=0,\dots,n\}.
\end{equation}
Then $\Gamma_{k,n}$ is precisely the set of all points in $V_k$ whose forward orbits remain in $V_{k+j}$ up to the $n$'th iterate. For $k\geq 1$, define
\begin{equation}\label{eq:Gk}
\Gamma_k = \bigcap_{n \geq 1} \Gamma_{k, n}.
\end{equation}
As $f|_{V_k}$ is a power map, we see that $\Gamma_{k, n}$ is also a round ring and, moreover, that for a fixed $k$
\begin{equation}
\label{eq:fngkn}
f^n(\Gamma_{k,n}) = V_{k+n}
\end{equation}
for all $n\geq 0$.
We show next that the nested intersection of these round rings is indeed a  sphere.

\begin{lemma}
\label{lem:gammak}
The set $\Gamma_k$ is a round sphere $\{x : |x| = t_k \}$.
\end{lemma}

We note that since $\Gamma_k\subset V_k$ it follows that $t_k\to\infty$ as $k\to\infty$.

\begin{proof}[Proof of Lemma~\ref{lem:gammak}]
We denote the modulus of a round ring $R=A(r,s)$ by $\operatorname{mod} R=\log(s/r)$. If the round ring $R\subset V_j$ then, since $f|_{V_j}$ is the power map $c_{j+1} \cdot p_{d_{j+1}}$, we have that $f(R) = A(c_{j+1}r^{d_{j+1}}, c_{j+1}s^{d_{j+1}})$ and hence
\[ \operatorname{mod} f(R) = \log\frac{s^{d_{j+1}}}{r^{d_{j+1}}}= d_{j+1} \operatorname{mod} R.\]

For $0\le j\le n$, the set $f^j(\Gamma_{k,n})$ is a round ring in $V_{k+j}$, so by repeatedly applying the above we see that 
\[ \operatorname{mod} f^n(\Gamma_{k,n}) = \left(\prod_{j=1}^n d_{k+j}\right)\operatorname{mod} \Gamma_{k,n}.\]
By \eqref{eq:fngkn}, $\operatorname{mod} f^n(\Gamma_{k,n}) = \operatorname{mod} V_{k+n} = \log \frac53$. It follows that
 $\operatorname{mod} \Gamma_{k,n} \to 0$ as $n\to \infty$. Since $\Gamma_{k,n}$ is a nested sequence of closed round rings, we conclude that $\Gamma_k$ is a sphere.
%
\end{proof}

\subsection{Spherical Julia components}

Next we show that $\Gamma_k$ is contained in the Julia set $J(f)$. Recalling the definition from section~\ref{sect:qrmaps}, we do this by showing directly that any neighbourhood of a point in $\Gamma_k$ has a forward orbit that ``blows up'' to cover all of $\R^3$.

\begin{lemma}
\label{lem:blowup}
Let $k\geq 1$, let $x_0 \in \Gamma_k$ and let $U$ be a neighbourhood of $x_0$. Then there exists $N\in \N$ such that for all $n\geq N$ we have $f^n(U) \supset V_{k+n}$.
\end{lemma}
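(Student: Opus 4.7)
The strategy is to conjugate the action of $f^n$ on the nested rings $\Gamma_{k,n}$ to an affine scaling via the Zorich map $\mathcal{Z}$, and then to exploit that the scaling factor $D_n := d_{k+1}d_{k+2}\cdots d_{k+n}$ tends super-exponentially to infinity.

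The first step is to establish that $f^n = C_n\cdot p_{D_n}$ on $\Gamma_{k,n}$ for some positive constant $C_n$. Indeed, each $V_{k+j}$ lies inside the pure power-map ring $A(S_{k+j},R_{k+j+1})$, on which $f$ agrees with $c_{k+j+1}\cdot p_{d_{k+j+1}}$. An induction using the semigroup property $p_d\circ p_{d'} = p_{dd'}$ (which follows from uniqueness of solutions to the Schr\"oder equation $p_d\circ \mathcal{Z} = \mathcal{Z}\circ(d\cdot)$) together with $c\cdot \mathcal{Z}(y) = \mathcal{Z}(y+(0,0,\log c))$ then yields the formula.

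Next I would choose a preimage $\tilde{x}_0 \in \mathcal{Z}^{-1}(x_0)$ outside the branch set $\mathcal{B}(\mathcal{Z})$, which is possible since $\mathcal{B}(\mathcal{Z})$ is a union of vertical lines through odd integer lattice points. For small $\delta > 0$, $\mathcal{Z}$ maps $B(\tilde{x}_0,\delta)$ homeomorphically into $U$. The formula $f^n = C_n p_{D_n}$ on $\Gamma_{k,n}$ lifts, on the slab $\mathcal{Z}^{-1}(\Gamma_{k,n})$, to the affine map $\Psi_n(y) = D_n\, y + (0, 0, \log C_n)$. Here $\mathcal{Z}^{-1}(\Gamma_{k,n})$ is the horizontal slab $\{y : y_3\in[\log\rho_n,\log\sigma_n]\}$, where $\rho_n, \sigma_n$ are the radii of $\Gamma_{k,n}$, both converging to $|x_0|$ as $n\to\infty$, so the slab's thickness shrinks to zero.

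For the coverage argument, fix $n$ large so that this slab is much thinner than $\delta$. Then $B(\tilde{x}_0,\delta)\cap\mathcal{Z}^{-1}(\Gamma_{k,n})$ projects onto essentially a full disk of radius $\delta$ in the $(y_1,y_2)$-plane, which $\Psi_n$ stretches to a disk of radius roughly $D_n\delta$ inside the target slab $\mathcal{Z}^{-1}(V_{k+n})$. Since $D_n\to\infty$ very rapidly while a fundamental domain of the Zorich group $G$ has bounded $(y_1,y_2)$-diameter, for $n$ sufficiently large the image $\Psi_n(B(\tilde{x}_0,\delta)\cap\mathcal{Z}^{-1}(\Gamma_{k,n}))$ contains a full fundamental domain of $G$ within $\mathcal{Z}^{-1}(V_{k+n})$. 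Applying $\mathcal{Z}$ then yields $V_{k+n}\subset f^n(U)$.

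The main subtlety is bookkeeping: the identity $\mathcal{Z}\circ\Psi_n = f^n\circ\mathcal{Z}$ is only guaranteed on the sub-slab $B(\tilde{x}_0,\delta)\cap\mathcal{Z}^{-1}(\Gamma_{k,n})$, where $f^n = C_n p_{D_n}$ holds by definition of $\Gamma_{k,n}$; elsewhere in $B(\tilde{x}_0,\delta)$, intermediate iterates may exit the pure power-map rings and the iterated formula fails. Working on this sub-slab is sufficient because $\Psi_n$ already maps its full $y_3$-extent onto that of $\mathcal{Z}^{-1}(V_{k+n})$ by definition of $\rho_n$ and $\sigma_n$, while the $(y_1,y_2)$-coverage is handled by the $D_n$-scaling described above.
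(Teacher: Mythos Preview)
Your proposal is correct and follows essentially the same route as the paper's proof: both lift $f^n|_{\Gamma_{k,n}}$ through the Zorich map to the affine map $y\mapsto D_n y + (0,0,\log C_n)$ on the thinning horizontal slab $\mathcal{Z}^{-1}(\Gamma_{k,n})$, and then use that $D_n\to\infty$ forces the image of a small neighbourhood to cover a full period of $\mathcal{Z}$ in the $(y_1,y_2)$-directions. The paper's version is marginally cleaner in one respect: rather than intersecting a round ball with the slab and arguing that the projection is ``essentially'' a disk of radius $\delta$, it works with the cuboid $[y_1-\epsilon,y_1+\epsilon]\times[y_2-\epsilon,y_2+\epsilon]\times[a_n,b_n]$ inside $B(y,2\epsilon)$, which makes the side-length estimate $2\epsilon D_n\ge 4$ exact and the periodicity argument immediate; also, since only the containment $\mathcal{Z}(B(y,2\epsilon))\subset U$ is needed, there is no need to avoid the branch set.
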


\begin{proof}
By \eqref{eq:Gk} and Lemma \ref{lem:gammak}, the $(\Gamma_{k,n})_{n=1}^{\infty}$ are nested ring domains converging to ${\Gamma_k = \{x : |x| = t_k\}}$. Therefore 
\begin{equation}
\label{eq:blowup0}
\mathcal{Z}^{-1}(\Gamma_{k,n}) = \R^2 \times [ a_n,b_n]
\end{equation}
where $(a_n)_{n=1}^{\infty}$ is increasing, $(b_n)_{n=1}^{\infty}$ is decreasing and 
\begin{equation}
\label{eq:blowup1} 
\lim_{n\to \infty}a_n = \lim_{n\to \infty} b_n = \log t_k.
\end{equation}
Now, pick $y\in \mathcal{Z}^{-1}(x_0)$ and $\epsilon >0$ such that $\mathcal{Z}(B(y,2\epsilon)) \subset U$. Write $y=(y_1,y_2,\log t_k)$ and, for $n\in \N$, let
\[ D_n = [y_1-\epsilon ,y_1+\epsilon] \times [y_2-\epsilon, y_2+\epsilon] \times [a_n,b_n].\]
By \eqref{eq:blowup1} there exists $N\in \N$ such that for all $n\geq N$, we have $D_n \subset B(y,2\epsilon)$. Observe that
\begin{equation}
\label{eq:blowup2}
\mathcal{Z}(D_n) \subset U \cap \Gamma_{k,n}.
\end{equation}
By increasing $N$ if necessary, we can also assume that for all $n\geq N$, we have
\[ \prod_{i=1}^nd_{k+i} > \frac{2}{\epsilon}.\]
On $\Gamma_{k,n}$, the iterates $f^j$ are just compositions of scaled versions of power maps for $j=1,\ldots, n$. More precisely, we have
\[ f^j = (c_{k+j} \cdot p_{d_{k+j}} ) \circ \ldots \circ ( c_{k+1} \cdot p_{d_{k+1}} ).\]
For $c>0$, as $p_d$ is a solution of the Schr\"oder equation $p_d\circ \mathcal{Z} = \mathcal{Z} \circ d$, we may write
\[ c\cdot p_d(x) = \mathcal{Z} ( d\mathcal{Z}^{-1}(x) + (0,0,\log c) ),\]
so the Zorich transform of $f^n$ is an affine map. That is, for $\mathcal{Z}(x) \in \Gamma_{k,n}$, we have $f^n(\mathcal{Z}(x)) = \mathcal{Z}(L_n(x))$, where
\begin{equation}
\label{eq:blowup3}
L_n(x) = d_{k+1}d_{k+2} \ldots d_{k+n}x + (0,0,h_n),
\end{equation}
for some $h_n\in \R$. Then, for $n\geq N$, by \eqref{eq:blowup2} and \eqref{eq:blowup3}, we have
\begin{align*}
f^n(U) &\supset f^n(\mathcal{Z}(D_n)) \\
&= \mathcal{Z}(L_n(D_n))\\
&=\mathcal{Z}( d_{k+1}\ldots d_{k+n}D_n + (0,0,h_n))
\end{align*}
By construction, $d_{k+1}\ldots d_{k+n}D_n$ is a cuboid with a square base of side length at least $4$. As $\mathcal{Z}$ has periods $(4,0,0)$ and $(0,4,0)$, it follows by this, \eqref{eq:blowup0}, \eqref{eq:blowup3} again and \eqref{eq:fngkn} that
\begin{align*}
f^n(U)&\supset \mathcal{Z} ( \R^2 \times [ d_{k+1}\ldots d_{k+n}a_n + h_n ,d_{k+1}\ldots d_{k+n}b_n + h_n] )\\
&= \mathcal{Z}(L_n(\R^2 \times [a_n,b_n]))\\
&=f^n ( \mathcal{Z} (\R^2 \times [a_n,b_n]))\\
&=f^n(\Gamma_{k,n})\\
&=V_{k+n}.
\end{align*}
This completes the proof.
\end{proof}

\begin{proposition}
\label{prop:gammak}
The round sphere $\Gamma_k$ belongs to $J(f)$.
\end{proposition}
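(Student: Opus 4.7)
The plan is to verify the definition of the Julia set directly: for any $x_0 \in \Gamma_k$ and any neighborhood $U$ of $x_0$, I will show that the forward iterates $f^n(U)$ exhaust all of $\R^3$, so that $\R^3 \setminus \bigcup_{n=1}^{\infty} f^n(U) = \emptyset$, which trivially has capacity zero.

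The proof is a chain of applications of the covering lemmas developed in this subsection. First, Lemma \ref{lem:blowup} provides an $N \in \N$ such that $f^n(U) \supset V_{k+n}$ for every $n \geq N$; this is the main technical input and has already been established. Next, Lemma \ref{lem:vk} gives $A_{k+n+1} \subset f(V_{k+n})$, so applying $f$ once more yields $f^{n+1}(U) \supset A_{k+n+1}$ for all $n \geq N$. Finally, Lemma \ref{lem:akcovering} gives $B(0, 4R_{k+n+1}) \subset f(A_{k+n+1})$, so
\begin{equation*}
f^{n+2}(U) \supset B(0, 4R_{k+n+1}) \quad \text{for all } n \geq N.
\end{equation*}

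Since Lemma \ref{lem:formula2} combined with $d_n = 3^n$ shows that $R_m \to \infty$ as $m \to \infty$, the nested union $\bigcup_{n \geq N} B(0, 4R_{k+n+1})$ equals all of $\R^3$. Therefore $\bigcup_{n=1}^{\infty} f^n(U) = \R^3$, and in particular its complement has capacity zero. As this holds for every neighborhood $U$ of $x_0$, we conclude $x_0 \in J(f)$. Since $x_0 \in \Gamma_k$ was arbitrary, $\Gamma_k \subset J(f)$.

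There is no real obstacle here because all of the substantive work has already been carried out: the blow-up property (Lemma \ref{lem:blowup}) replaces the usual Montel-type argument and says that a neighborhood of a point in $\Gamma_k$ eventually covers the ring $V_{k+n}$; the two covering lemmas for $V_k$ and $A_k$ then propagate this surjectivity outward to arbitrarily large balls. The only thing to verify is that $R_m \to \infty$, which is immediate from the growth estimate in Lemma \ref{lem:formula2}.
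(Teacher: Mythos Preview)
Your proof is correct and follows essentially the same approach as the paper's own proof: both invoke Lemma~\ref{lem:blowup} to obtain $f^n(U)\supset V_{k+n}$, then apply Lemma~\ref{lem:vk} followed by Lemma~\ref{lem:akcovering} to conclude $f^{n+2}(U)\supset B(0,4R_{k+n+1})$, whence the forward orbit of $U$ exhausts $\R^3$. Your version is slightly more explicit in justifying $R_m\to\infty$, but the argument is otherwise identical.
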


\begin{proof}
Let $x_0 \in \Gamma_k$ and let $U$ be a neighbourhood of $x_0$. Then by Lemma \ref{lem:blowup}, there exists $N\in \N$ such that for all $n\geq N$ we have by Lemma \ref{lem:vk} and Lemma \ref{lem:akcovering}
\begin{align*}
f^{n+2}(U) &\supset f^2 (V_{k+n})\\
&\supset f(A_{k+n+1}) \\
&\supset B(0,4R_{k+n+1}).
\end{align*}
Thus 
\[ \bigcup_{n=1}^{\infty} f^n(U) = \R^3\]
and we conclude that $x_0 \in J(f)$.
\end{proof}

To show that $\Gamma_k$ is in fact a component of $J(f)$, we need to turn to the quasi-Fatou set.

\begin{lemma}
\label{lem:qf1}
For $k\geq 1$, we have $B_k \subset QF(f)$.
\end{lemma}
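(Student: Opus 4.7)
The plan is to combine the forward invariance of the rings $B_k$ established in Lemma~\ref{lem:bk} with the capacity-based definition of the Julia set. The key observation is that for any $x \in B_k$ (including boundary points), every sufficiently small neighbourhood has forward orbit trapped in a thick shell around the origin, leaving an open ball uncovered.

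First I would upgrade Lemma~\ref{lem:bk} slightly: the strict inequalities $2^{d_{k+1}} R_{k+1} > 4R_{k+1}$ and $2^{-5d_{k+1}} R_{k+2} < \tfrac14 R_{k+2}$ appearing in its proof show that $f(B_k) \subset \operatorname{int}(B_{k+1})$, not merely $B_{k+1}$. Hence for every $x \in B_k$, the image $f(x)$ lies in $\operatorname{int}(B_{k+1})$, and by continuity of $f$ there exists a neighbourhood $U \subset \R^3$ of $x$ (not necessarily contained in $B_k$) such that $f(U) \subset \operatorname{int}(B_{k+1}) \subset B_{k+1}$.

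Next, applying Lemma~\ref{lem:bk} inductively to $f(U) \subset B_{k+1}$ gives $f^n(U) \subset B_{k+n}$ for every $n \geq 1$. Since $R_j$ is (strictly) increasing in $j$ — for instance, by Lemma~\ref{lem:formula2} — we have the containment
\[ B_{k+n} \subset \{y \in \R^3 : |y| \geq 4R_{k+n}\} \subset \{y \in \R^3 : |y| \geq 4R_{k+1}\} \]
for every $n \geq 1$. Consequently,
\[ \bigcup_{n=1}^{\infty} f^n(U) \subset \{y \in \R^3 : |y| \geq 4R_{k+1}\}, \]
and the complement of this union contains the nonempty open ball $B(0, 4R_{k+1})$, which has positive capacity. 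By the definition of $J(f)$ given in the preliminaries, this means $x \notin J(f)$, so $x \in QF(f)$.

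The argument is essentially mechanical, and the only mild subtlety is handling the boundary spheres of $B_k$, which is why the first step strengthens Lemma~\ref{lem:bk} to an interior containment before invoking continuity of $f$. No additional quasiregular machinery beyond continuity, Lemma~\ref{lem:bk}, and the positivity of the capacity of a ball is required.
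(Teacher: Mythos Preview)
Your proof is correct and follows essentially the same approach as the paper: use Lemma~\ref{lem:bk} to show that the forward orbit of (a neighbourhood of any point in) $B_k$ stays in $\bigcup_{n\ge 1} B_{k+n}$ and hence omits a ball about the origin, which has positive capacity. Your treatment is in fact more careful than the paper's, which glosses over the issue of boundary points of the closed ring $B_k$; your strengthening to $f(B_k)\subset\operatorname{int}(B_{k+1})$ and use of continuity cleanly resolves this.
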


\begin{proof}
 By Lemma \ref{lem:bk}, $f$ maps each ring $B_k$ into $\interior B_{k+1}$ and it immediately follows that $\interior B_{k+1}$ is contained in $QF(f)$ as the forward orbit of $\interior B_{k+1}$ omits a neighbourhood of the origin. By Lemma~\ref{lem:bk} again, we obtain that $B_k \subset QF(f)$.
\end{proof}

\begin{corollary}
\label{cor:gammak}
Let $k\geq 2$. Then $\Gamma_k$ is a component of $J(f)$.
\end{corollary}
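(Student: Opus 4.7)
The plan is to show that the component $T$ of $J(f)$ containing $\Gamma_k$ equals $\Gamma_k$ itself; note that $\Gamma_k \subset J(f)$ by Proposition \ref{prop:gammak} and $\Gamma_k$ is connected. First, I would confine $T$ to $\operatorname{int} V_k$. By Lemma \ref{lem:qf1}, the closed rings $B_{k-1}$ and $B_k$ lie in $QF(f)$ (here $k \geq 2$ ensures $B_{k-1}$ exists). Moreover, both spheres comprising $\partial V_k$ lie in $QF(f)$: by Lemma \ref{lem:vk}, $f$ sends them into $B_k$ and $B_{k+1}$ respectively, and complete invariance of $QF(f)$ pulls them back. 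The connected set $T \subset J(f)$ is then trapped in the component of the complement of these QF-shells that contains $\Gamma_k$, namely $\operatorname{int} V_k$.

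Next, I would show $\{2^{t_k}R_k < |x| < \tfrac{5}{2}R_k\} \subset QF(f)$. A computation like that of Lemma \ref{lem:bk} shows $D_j := A(\tfrac{5}{2}R_j, 4R_j) \subset QF(f)$ for each $j$ (since $f$ maps $D_j$ into $B_{j+1}$). For $x$ in the outer half of $V_k$, write $|x| = 2^{t_k}R_k(1+\delta)$ with $\delta > 0$. As long as $f^j(x) \in V_{k+j}$, iterating the scaled power maps and using $t_j = 1 + t_{j+1}/d_{j+1}$ (shown in the proof of Lemma \ref{lem:gammak}) gives $|f^j(x)| = 2^{t_{k+j}}R_{k+j}(1+\delta)^{d_{k+1}\cdots d_{k+j}}$. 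Since this exponent grows, the orbit eventually exits $V_{k+n}$ outward at some smallest $n$; one checks $|f^n(x)| \leq (\tfrac{5}{4})^{d_{k+n}}R_{k+n}$, which lies in $D_{k+n} \cup B_{k+n} \subset QF(f)$. So $T \subset \{\tfrac{3}{2}R_k < |x| \leq 2^{t_k}R_k\}$.

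The key step is to exhibit a sequence of $QF(f)$-spheres inside $V_k$ accumulating on $\Gamma_k$ from below. The inner boundary sphere $\{|x|=r_n^-\}$ of the ring $\Gamma_{k,n}$ is mapped by $f^n$ onto the inner boundary $\{|y|=\tfrac{3}{2}R_{k+n}\}$ of $V_{k+n}$, which is then mapped by $f$ into $B_{k+n} \subset QF(f)$ via Lemma \ref{lem:vk}. By complete invariance of $QF(f)$, we obtain $\{|x|=r_n^-\} \subset QF(f)$, and the computations in the proof of Lemma \ref{lem:gammak} give $r_n^- \nearrow 2^{t_k}R_k$ as $n \to \infty$. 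Now suppose for contradiction $T \neq \Gamma_k$; then there exists $y \in T$ with $|y| < 2^{t_k}R_k$. Since $T$ is connected, its radial image $\{|x| : x \in T\}$ is an interval containing both $|y|$ and $2^{t_k}R_k$, hence contains $r_n^-$ for $n$ sufficiently large. Thus $T$ meets the $QF(f)$-sphere $\{|x|=r_n^-\}$, contradicting $T \subset J(f)$. Hence $T = \Gamma_k$. The main obstacle is producing these inner QF-spheres, and this is resolved elegantly by the nested-ring structure $(\Gamma_{k,n})$ underlying the definition of $\Gamma_k$.
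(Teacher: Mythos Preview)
Your argument is correct, but it takes a noticeably longer and more asymmetric path than the paper's proof. The paper's approach is simply this: for each $n$, the map $f^n$ sends the boundary spheres of the ring $\Gamma_{k,n}$ onto the boundary spheres of $V_{k+n}$ (by \eqref{eq:fngkn}), and then Lemma~\ref{lem:vk} sends the inner one into $B_{k+n}$ and the outer one into $B_{k+n+1}$. By Lemma~\ref{lem:qf1} and complete invariance, both boundary spheres of $\Gamma_{k,n}$ lie in $QF(f)$. Since the $\Gamma_{k,n}$ are nested closed rings with $\bigcap_n \Gamma_{k,n}=\Gamma_k$, the Julia component $T$ containing $\Gamma_k$ is trapped inside every $\Gamma_{k,n}$, hence $T=\Gamma_k$.

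You discover exactly this mechanism for the \emph{inner} boundaries in your Step~3, but for the outer side you instead run a separate orbit-tracking argument (Step~2) to show that the entire region $\{2^{t_k}R_k<|x|<\tfrac52 R_k\}$ lies in $QF(f)$, introducing the auxiliary rings $D_j$ and the escape estimate $|f^n(x)|\le(\tfrac54)^{d_{k+n}}R_{k+n}$. This works, and in fact proves a bit more than needed, but the symmetry you exploit on the inner side is equally available on the outer side: the outer boundary of $\Gamma_{k,n}$ maps under $f^{n+1}$ into $B_{k+n+1}\subset QF(f)$ for the same reason. Using both boundaries at once collapses your three steps into one and removes any need for Step~1, Step~2, or the auxiliary $D_j$.
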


\begin{proof}
For $n\in \N$, we have $f^n(\Gamma_{k,n}) = V_{k+n}$. By Lemma \ref{lem:vk}, $f$ maps the inner boundary component of $V_{k+n}$ into $B_{k+n}$ and the outer boundary component of $V_{k+n}$ into $B_{k+n+1}$. By Lemma \ref{lem:qf1} and the complete invariance of $QF(f)$, it follows that $\partial \Gamma_{k,n} \subset QF(f)$. As $\bigcap_{n=1}^{\infty} \Gamma_{k,n} = \Gamma_k$, it follows that $\Gamma_k$ is a component of $J(f)$.
\end{proof}

We next observe that the $B_k$ are contained in distinct quasi-Fatou components.

\begin{lemma}
\label{lem:qf2}
Let $k >j \geq 1$. Then $B_k$ and $B_j$ are contained in different components of $QF(f)$.
\end{lemma}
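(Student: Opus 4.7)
The plan is to use the round sphere $\Gamma_{j+1}$ as a topological barrier that lies in $J(f)$ and separates $B_j$ from $B_k$ in $\R^3$. Once that separation is established, the conclusion is immediate, since any two points lying in a common component of $QF(f)$ can be joined by an arc disjoint from $J(f)$.

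First I would assume without loss of generality that $k \geq j+1$ (if $k > j$ then $k \geq j+1$). The decisive observation is that $\Gamma_{j+1}$ sits strictly inside the thin ring $V_{j+1} = A(\tfrac{3}{2}R_{j+1}, \tfrac{5}{2}R_{j+1})$. By Lemma \ref{lem:gammak} the set $\Gamma_{j+1}$ is the sphere $\{|x| = 2^{t_{j+1}} R_{j+1}\}$ with
\[ t_{j+1} = 1 + \sum_{m=j+2}^{\infty}\prod_{i=j+2}^{m} d_i^{-1}. \]
Since $d_i = 3^i \geq 9$ for $i \geq 2$, a trivial geometric-series tail bound shows that $1 < t_{j+1} < \log_2(5/2)$, so $\tfrac{3}{2}R_{j+1} < 2^{t_{j+1}}R_{j+1} < \tfrac{5}{2}R_{j+1}$, confirming $\Gamma_{j+1} \subset V_{j+1}$.

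Next I would check that $B_j$ and $B_k$ fall on opposite sides of $\Gamma_{j+1}$. The outer boundary of $B_j$ is the sphere of radius $\tfrac{1}{4}R_{j+1}$, which lies strictly inside $\Gamma_{j+1}$; hence $B_j$ is contained in the bounded component of $\R^3 \setminus \Gamma_{j+1}$. On the other hand, since $k \geq j+1$ and the sequence $(R_n)$ is increasing, the inner boundary of $B_k$ is the sphere of radius $4R_k \geq 4R_{j+1} > \tfrac{5}{2}R_{j+1}$, placing $B_k$ in the unbounded component of $\R^3 \setminus \Gamma_{j+1}$.

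Finally, by Proposition \ref{prop:gammak} we have $\Gamma_{j+1} \subset J(f)$, so any connected set joining a point of $B_j$ to a point of $B_k$ must meet $\Gamma_{j+1}$, and hence $J(f)$. Therefore $B_j$ and $B_k$ lie in different components of $QF(f)$. I do not expect any real obstacle: the only quantitative step is the estimate placing $2^{t_{j+1}}$ in the interval $(\tfrac{3}{2}, \tfrac{5}{2})$, and everything else is a direct topological consequence of the work already done in Lemma \ref{lem:gammak} and Proposition \ref{prop:gammak}.
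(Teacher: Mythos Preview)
Your proof is correct and follows essentially the same approach as the paper: both arguments exhibit one of the spheres $\Gamma_m\subset J(f)$ as a topological separator between $B_j$ and $B_k$ (the paper uses $\Gamma_k$, you use $\Gamma_{j+1}$). Your quantitative estimate on $t_{j+1}$ is fine but unnecessary, since $\Gamma_{j+1}=\bigcap_n \Gamma_{j+1,n}\subset \Gamma_{j+1,0}=V_{j+1}$ directly from the definition.
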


\begin{proof}
By Proposition \ref{prop:gammak}, the sphere $\Gamma_k=\{x : |x| = t_k \}$ is contained in $V_k \cap J(f)$. 

Recalling that $V_k = A(\tfrac32 R_k , \tfrac52 R_k)$ and $B_k = A( 4R_k , \tfrac14 R_{k+1})$, we see that $B_k$ is contained in the unbounded component of the complement of $\Gamma_k$ and that $B_{k-1}$ is contained in the bounded component of the complement of $\Gamma_k$.
We conclude that $B_j$ is also contained in the bounded component of the complement of $\Gamma_k$, and thus $\Gamma_k$ separates $B_j$ and $B_k$. This proves the lemma.
\end{proof}

As $B_k \subset QF(f)$, denote by $\Omega_k$ the component of $QF(f)$ which contains $B_k$. We may now give the proof of Theorem \ref{thm:hollow}. 

\begin{proof}[Proof of Theorem \ref{thm:hollow}]
Part (a) follows immediately from Corollary \ref{cor:gammak}. 

For part (b), by construction, it follows that $f(\Gamma_{k,n}) = \Gamma_{k+1,n-1}$ for $k\in \N$ and $n\geq 1$. By taking the intersection over $n$, we conclude that $f$ maps $\Gamma_k$ onto $\Gamma_{k+1}$. 

For part (c), as the quasi-Fatou component $\Omega_{k}$ contains the ring $B_k$, but is disjoint from the spheres $\Gamma_k$ and $\Gamma_{k+1}$, it must be bounded and hollow.

For part (d), we know from the proof of Corollary \ref{cor:gammak} that $\partial \Gamma_{k,n}$ is contained in $QF(f)$. If we can show that for all $k$ and $n$ the outer boundary component of $\Gamma_{k,n}$ is contained in $\Omega_k$ and the inner boundary component of $\Gamma_{k,n}$ is contained in $\Omega_{k-1}$, then we are done. To do this, we will analyze what happens on the non-negative $x_3$-axis $L$.

Note that, by \eqref{eq:fngkn}, the inner and outer boundary components of $\Gamma_{k,n}$ are mapped by $f^n$ to the inner and outer boundary components of $V_{k+n}$. Thus, by Lemma~\ref{lem:vk}, $f^{n+1}(\partial_{inner}\Gamma_{k,n})\subset B_{k+n}$ and $f^{n+1}(\partial_{outer}\Gamma_{k,n})\subset B_{k+n+1}$. Now consider the line segment $L_{k,n}\subset L$ that joins the outer boundary component of $\Gamma_{k,n}$ to the inner boundary component of $\Gamma_{k+1,n}$. We see that $f^{n+1}(L_{k,n})\subset B_{k+n+1}$ because this holds for the images of the endpoints of $L_{k,n}$ and, by Lemma~\ref{lem:L2}, the iterates of $f$ act homeomorphically on $L$. Thus $L_{k,n}\subset QF(f)$ by Lemma~\ref{lem:qf1} and the complete invariance of $QF(f)$.

So we have that $L_{k,n}\cup \partial_{outer}\Gamma_{k,n} \cup \partial_{inner}\Gamma_{k+1,n}$ is a connected subset of $QF(f)$  that intersects $B_k$, and is therefore contained in $\Omega_k$. In particular, $\partial_{outer}\Gamma_{k,n} \subset \Omega_k$ and $\partial_{inner}\Gamma_{k,n} \subset \Omega_{k-1}$.
 This completes the proof.
\end{proof}

\begin{rem}
    The proof above shows that for all $t\ge t_1$ the point $(0,0,t)$ lies either in one of the spherical Julia components $\Gamma_k=\{x:|x|=t_k\}$ or in the quasi-Fatou set $QF(f)$. On the other hand, it can be shown that each of the quasi-Fatou components $\Omega_k$ has infinitely many bounded complementary components, cf.~\cite{NS17b}.
\end{rem}

\section{Constructing quasiregular maps meeting growth conditions}\label{sec5}

Given $R>0$, $d\in \N$ and $c>0$, denote by $F_{R,d,c}$ the quasiregular map from Theorem~\ref{thm:interp_gen}.

Suppose that $\lambda_n \geq 1$ for all $n$. Construct sequences $(R_n)$ and $(S_n)$ for $n\geq 1$ as follows. Let $R_1 =1$, $S_1 = e^{1/3}$ and for $n\geq 1$, set $R_{n+1} = \lambda_n S_n$ and $S_{n+1} = e^{1/d_{n+1}}R_{n+1}$, recalling that $d_n = 3^n$.

We construct a quasiregular map $f$ depending on the sequence $(\lambda_n)$ as follows.
\begin{itemize}
\item On $\overline{B(0,R_1)}$ we define $f(x) = p_3(x)$.
\item On $A(R_1,S_1)$ we define $f(x) = F_{R_1,3,1}(x)$.
\item On $A(S_1,R_2)$, we set $C_1 = 1$ and define $f(x) = C_1 \cdot p_9(x)$.
\item For $n\geq 1$, suppose $C_n$ is given, then on $A(R_{n+1},S_{n+1})$ we define $f(x) = F_{R_{n+1},d_{n+1},C_{n}}(x)$.
\item Then we set $C_{n+1} = C_n R_{n+1} ^{ -2 d_{n+1}}$ and on $A(S_{n+1},R_{n+2})$ we define $f(x) = C_{n+1} \cdot p_{d_{n+2}}(x)$.
\end{itemize}

Thus on rings of the form $A(R_n,S_n)$ we interpolate, whereas on rings of the form $A(S_n,R_{n+1})$ we hold up the growth of the map by keeping it as a power-type map.

For $r\ge 0$, we denote the maximum modulus by $M(r,f)= \max\limits_{|x|=r}|f(x)|$.

\begin{lemma}
\label{lem:formulas}
For $n\geq 0$, we have
\[ R_{n+1} = \left ( \prod_{j=1}^n \lambda_j \right ) \exp \left ( \sum_{j=1}^n \frac{ 1}{d_j} \right ), \quad \text{ and } \quad
S_{n+1} = \left ( \prod_{j=1}^n \lambda_j \right ) \exp \left ( \sum_{j=1}^{n+1} \frac{ 1}{d_j} \right ).\]
Moreover, we have 
\[ M(R_{n+1},f) = \left ( \prod_{j=1}^n \lambda_j^{d_{j+1}} \right ) e^{3n}, \quad \text{ and } \quad M(S_{n+1},f) = \left ( \prod_{j=1}^n \lambda_j^{d_{j+1}} \right ) e^{3(n+1)}.\]
\end{lemma}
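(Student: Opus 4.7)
The plan is to prove all four formulas by a straightforward induction on $n$, using Theorem 2 together with the recursive definitions. The formulas for $R_{n+1}$ and $S_{n+1}$ come essentially for free from the recursions $R_{n+1}=\lambda_n S_n$ and $S_{n+1}=e^{1/d_{n+1}}R_{n+1}$: starting from $R_1=R$ and iterating gives
\[ R_{n+1} = \lambda_n e^{1/d_n} R_n = \cdots = \left(\prod_{j=1}^n \lambda_j\right)\exp\!\left(\sum_{j=1}^n\tfrac{1}{d_j}\right) R, \]
and then $S_{n+1}=e^{1/d_{n+1}}R_{n+1}$ absorbs one extra factor in the exponential sum.

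For the maximum modulus formulas, the main tool is Theorem 2: on the ring $A(R_{n+1},S_{n+1})$ the map $f$ is $F_{R_{n+1},d_{n+1},C_n}$, which equals $C_n\cdot p_{d_{n+1}}$ on the inner boundary and $C_n R_{n+1}^{-2d_{n+1}}\cdot p_{d_{n+2}} = C_{n+1}\cdot p_{d_{n+2}}$ on the outer boundary. Since $|p_d(x)|=|x|^d$, this gives
\[ M(R_{n+1},f) = C_n R_{n+1}^{d_{n+1}} \quad\text{and}\quad M(S_{n+1},f) = C_{n+1} S_{n+1}^{d_{n+2}} = C_n R_{n+1}^{d_{n+1}} e^3, \]
so $M(S_{n+1},f)=e^3 M(R_{n+1},f)$ and it suffices to establish the formula for $M(R_{n+1},f)$.

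The clean way to do this is to derive a one-step multiplicative recursion for $M(R_{n+1},f)$. Using $C_n = C_{n-1}R_n^{-2d_n}$ and $R_{n+1}=\lambda_n e^{1/d_n}R_n$, and the crucial identity $d_{n+1}=3d_n$ (so that $(e^{1/d_n})^{d_{n+1}}=e^3$ and $d_{n+1}-2d_n = d_n$), I compute
\[ M(R_{n+1},f) = C_n R_{n+1}^{d_{n+1}} = C_{n-1}R_n^{-2d_n}\cdot \lambda_n^{d_{n+1}} e^3 R_n^{d_{n+1}} = \lambda_n^{d_{n+1}} e^3 \cdot C_{n-1}R_n^{d_n} = \lambda_n^{d_{n+1}} e^3 \cdot M(R_n,f). \]
Combined with the base case $M(R_1,f)=R_1^3=R^3$ (since $f=p_3$ on $\overline{B(0,R_1)}$), induction yields $M(R_{n+1},f)=\prod_{j=1}^n \lambda_j^{d_{j+1}}\cdot e^{3n} R^3$, and the formula for $M(S_{n+1},f)$ then follows by the factor of $e^3$ noted above.

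No real obstacle is anticipated; the only delicate point is the bookkeeping of indices when collapsing the products $C_n=\prod_{j=1}^n R_j^{-2d_j}$ and $R_{n+1}^{d_{n+1}}$ against each other. Writing everything as a one-step recursion rather than trying to expand both sides simultaneously avoids this, reducing the proof to the two identities $d_{n+1}=3d_n$ and $R_{n+1}/R_n=\lambda_n e^{1/d_n}$.
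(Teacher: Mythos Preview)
Your proof is correct and follows exactly the approach the paper indicates: the paper's own proof reads in its entirety ``This is an elementary induction calculation that we omit,'' and you have simply carried out that induction. Your one-step recursion $M(R_{n+1},f)=\lambda_n^{d_{n+1}}e^3\,M(R_n,f)$, obtained from $C_n=C_{n-1}R_n^{-2d_n}$ (with the implicit convention $C_0=1$ coming from the use of $F_{R_1,3,1}$), together with $d_{n+1}=3d_n$, is precisely the computation the authors had in mind.
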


\begin{proof}
This is an elementary induction calculation that we omit.
\end{proof}

Observe that to obtain a quasiregular mapping on $\R^3$ via this construction, we need
\begin{equation}\label{eq:prod-lambda_j}
    \prod_{j=1}^n \lambda_j \to \infty \quad \mbox{ as } n\to \infty.
\end{equation}

\begin{theorem}
Consider the constructions above.
\begin{itemize}
\item Suppose that  $(\eta_k)$ is any real sequence. Then we may choose $(\lambda_n)$ satisfying \eqref{eq:prod-lambda_j} so that there exists a sequence $r_k\to\infty$ with $r_k <2^k$ and 
\begin{equation}\label{eq:biggrowth}
    \frac{\log \log M(r_k,f) }{ \log r_k} \geq \eta_k.
\end{equation} 
\item Given any positive sequence $(\epsilon_k)$, we may choose $(\lambda_n)$ satisfying \eqref{eq:prod-lambda_j} such that, for $k\ge 2$,
\begin{equation}\label{eq:slowgrowth}
\frac{ \log \log M(R_k,f) }{\log R_k} \leq  \epsilon_k.
\end{equation}
\end{itemize}
\end{theorem}

In other words, the first part says that the transcendental type quasiregular map $f$  can have arbitrarily fast growth, and the second part says that $f$ may have arbitrarily slow growth.

\begin{proof}
By Lemma \ref{lem:formulas}, we have
\begin{align*}
\frac{ \log \log M(R_{n+1},f) }{\log R_{n+1}} &= \frac{ \log \log \left [  \left ( \prod_{j=1}^n \lambda_j^{d_{j+1}} \right ) e^{3n}  \right ] }{ \log \left [  \left ( \prod_{j=1}^n \lambda_j \right ) \exp \left ( \sum_{j=1}^n \frac{ 1}{d_j} \right ) \right ] } \\
&= \frac{ \log \left [  \sum_{j=1}^n 3^{j+1} \log \lambda_j + 3n  \right ] }{ \sum_{j=1}^n \log \lambda_j + \sum_{j=1}^n \frac{1}{3^j}  }.
\end{align*}

For the first part, take $\lambda_1,\ldots,\lambda_{n_1}=1$, where $n_1$ is large. Then, by Lemma~\ref{lem:formulas}, $R_{n_1+1}\le e^{1/2}$ and $M(R_{n_1+1}, f) = e^{3n_1}$. Take $r_1=R_{n_1+1}$. Then $r_1\le e^{1/2}<2$ and \eqref{eq:biggrowth} holds for $k=1$ provided $n_1$ is chosen large enough.

We now inductively define a sequence $(\lambda_n)$ and a subsequence $(r_k)$ of $(R_n)$ as follows. Suppose that for some $k\ge 1$ we have chosen integers $n_1, \ldots, n_k$ and values $\lambda_1,\ldots,\lambda_{n_k}$ such that $r_k=R_{n_k+1}$ satisfies $r_k<2^k$ and \eqref{eq:biggrowth}. We then set $\lambda_{n_k+1} = 2e^{-1/2}$ and $\lambda_j=1$ for $j=n_k+2, \ldots, N$ where $N$ is chosen large enough that
\[ \frac{ \log \log M(R_{N+1} , f) }{\log R_{N+1}} = \frac{ \log \left [  \sum_{j=1}^{n_k+1} 3^{j+1} \log \lambda_j + 3N  \right ] }{ \sum_{j=1}^{n_k+1} \log \lambda_j + \sum_{j=1}^N \frac{1}{3^j}  } > \eta_{k+1}.\]
(Note that the denominator here is bounded as $N\to\infty$ but the numerator is unbounded.) We set $n_{k+1}=N$ and $r_{k+1}=R_{N+1}$ so that \eqref{eq:biggrowth} holds for $k+1$. Moreover,
\[ r_{k+1} = 2e^{-1/2}\exp\left(\sum_{j=n_k+1}^N \frac{1}{d_j}\right)r_k \le 2r_k  < 2^{k+1}.\]
This completes the induction and the sequence $(r_k)$ has the required properties. Since $\lambda_j$ equals $2e^{-1/2}>1$ infinitely often, we see that \eqref{eq:prod-lambda_j} holds and $r_k\to\infty$.

The second part is also proved by induction. Suppose we are given $(\epsilon_k)$ and that $\lambda_1,\ldots,\lambda_{n-1}$ have been chosen such that $R_k$ satisfies \eqref{eq:slowgrowth} for $k=2,\ldots,n$ (we make no assumption in the case that $n=1$). Writing $y=\log \lambda_n$, we have that
\[ \frac{ \log \log M(R_{n+1},f) }{\log R_{n+1}} =  \frac{ \log \left [  \sum_{j=1}^{n-1} 3^{j+1} \log \lambda_j + 3n  +3^{n+1}y \right ] }{ \sum_{j=1}^{n-1} \log \lambda_j + \sum_{j=1}^{n} \frac{1}{3^j} + y } \to 0\]
as $y\to\infty$. Thus choosing $\lambda_n$ large enough gives \eqref{eq:slowgrowth} for $k=n+1$.
\end{proof}

\begin{rem}
    If we follow the above construction but with $\lambda_n=1$ for all $n$, then we obtain a quasiregular map $f\colon B(0,e^{1/2})\to\R^3$. This map coincides with a scaled power map $C_n\cdot p_{d_{n+1}}$ on each sphere $\{|x|=R_{n+1}=\exp(\frac12(1-3^{-n}))\}$, and $|f(x)|=e^{3n}$ on these spheres.
\end{rem}

\end{document}